\newcommand{\E}{\mathbb E}
\newcommand{\tr}{\mathrm{tr}}
\newcommand{\ds}{\displaystyle}
\newtheorem{thm}{Theorem}[section]
\newtheorem{cor}[thm]{Corollary}
\newtheorem{lem}[thm]{Lemma}
\newtheorem{prop}[thm]{Proposition}
\theoremstyle{definition}
\theoremstyle{remark}
\newtheorem{rem}[thm]{Remark}
\newtheorem{Example}[thm]{Example}
\begin{document}

\title[Quasi-minimal Lorentz Surfaces with Pointwise 1-type Gauss Map]
{Quasi-minimal Lorentz Surfaces with Pointwise 1-type Gauss Map in Pseudo-Euclidean 4-Space}

\author{Velichka Milousheva, Nurettin Cenk Turgay}

\address{Institute of Mathematics and Informatics, Bulgarian Academy of Sciences,
Acad. G. Bonchev Str. bl. 8, 1113, Sofia, Bulgaria;   "L.
Karavelov" Civil Engineering Higher School, 175 Suhodolska Str., 1373 Sofia, Bulgaria}
\email{vmil@math.bas.bg}

\address{Istanbul Technical University, Faculty of Science and Letters, Department of Mathematics,
34469 Maslak, Istanbul, Turkey}
\email{turgayn@itu.edu.tr}

\subjclass[2010]{Primary 53B30, Secondary 53A35, 53B25}
\keywords{Pseudo-Euclidean space, Lorentz surface, quasi-minimal surface, finite type Gauss map, parallel mean curvature vector field}

\begin{abstract}
A Lorentz surface in the four-dimensional pseudo-Euclidean space  with neutral metric is called quasi-minimal if
its mean curvature vector is lightlike at each point. In the present paper we obtain the complete classification of quasi-minimal Lorentz surfaces  with pointwise 1-type Gauss map.
\end{abstract}

\maketitle

\section{Introduction}

In the present paper we study Lorentz surfaces in pseudo-Euclidean space $\E^4_2$.
A surface is called \textit{minimal} if its mean curvature vector vanishes identically.
Minimal surfaces are important in differential geometry as well as in physics.
Minimal Lorentz surfaces in $\mathbb{C}^2_1$ have been classified recently by  B.-Y. Chen \cite{Chen-TaJM}.
Several classification results for minimal Lorentz surfaces in indefinite space forms are obtained in \cite{Chen3}. In particular, a complete classification of all minimal Lorentz surfaces in a pseudo-Euclidean space $\E^m_s$ with arbitrary dimension $m$ and arbitrary index $s$ is given.

A natural extension of minimal surfaces are quasi-minimal surfaces. A surface in a pseudo-Riemannian manifold is called \textit{quasi-minimal} (also pseudo-minimal or marginally trapped) if its mean curvature vector is lightlike at each point of the surface \cite{Rosca}.
Quasi-minimal surfaces in pseudo-Euclidean space have been very actively studied in the last few years.
In \cite{Chen-JMAA}  B.-Y. Chen classified
quasi-minimal Lorentz  flat surfaces in $\E^4_2$ and gave a complete classification of biharmonic Lorentz surfaces
in  $\E^4_2$ with lightlike mean curvature vector. Several other
families of quasi-minimal surfaces  have also been classified. For
example, quasi-minimal surfaces with constant Gauss curvature in
$\E^4_2$ were classified in \cite{Chen-HMJ, Chen-Yang}. Quasi-minimal
Lagrangian surfaces and quasi-minimal slant surfaces in complex
space forms were classified, respectively, in \cite{Chen-Dillen} and
\cite{Chen-Mihai}. The classification of quasi-minimal surfaces with parallel mean
curvature vector in  $\E^4_2$ is obtained
in \cite{Chen-Garay}.
In \cite{GM5} the classification of quasi-minimal rotational
surfaces of elliptic, hyperbolic or parabolic type is given.
For an up-to-date survey on  quasi-minimal
surfaces, see also \cite{Chen-TJM}.

Another basic class of surfaces in Riemannian and pseudo-Riemannian geometry are the surfaces with parallel mean curvature vector field, since they are critical points of some natural functionals and  play important role in differential geometry,  the theory of harmonic maps, as well as in physics.
Surfaces with parallel mean curvature vector field in Riemannian space forms were classified in the early 1970s by Chen \cite{Chen1} and Yau  \cite{Yau}. Recently, spacelike surfaces with parallel mean
curvature vector field  in arbitrary indefinite space forms were classified in \cite{Chen1-2} and  \cite{Chen1-3}.
A complete classification of Lorentz surfaces with parallel mean curvature vector field in arbitrary pseudo-Euclidean space $\E^m_s$ is given in \cite{Chen-KJM,Fu-Hou,HouYang2010}. A survey on classical and recent results concerning
submanifolds with parallel mean curvature vector in Riemannian manifolds
as well as in pseudo-Riemannian manifolds is presented in \cite{Chen-survey}.

The study of submanifolds of Euclidean or pseudo-Euclidean
space via the notion of finite type immersions began in the late
1970's with the papers \cite{Ch1,Ch2} of B.-Y. Chen.  An isometric immersion $x:M$
$\rightarrow $ $\E^{m}$ of a submanifold $M$ in Euclidean
$m$-space $\E^{m}$  (or  pseudo-Euclidean space $\E^m_s$)  is said
to be of \emph{finite type} \cite{Ch1}, if $x$ identified with the
position vector field of $M$ in $\E^{m}$ (or $\E^m_s$) can be
expressed as a finite sum of eigenvectors of the Laplacian $\Delta
$ of $M$, i.e.
\begin{equation*}
x=x_{0}+\sum_{i=1}^{k}x_{i},
\end{equation*}
where $x_{0}$ is a constant map, $x_{1},x_{2},...,x_{k}$ are
non-constant maps such that $\Delta x_i=\lambda _{i}x_{i},$
$\lambda _{i}\in \mathbb{R}$, $1\leq i\leq k.$ More precisely, if $\lambda
_{1},\lambda _{2},...,\lambda _{k}$ are different, then $M$ is
said to be of \emph{$k$-type}. Many results on finite type
immersions have been collected in the survey paper \cite{Ch3}. The newest results on
submanifolds of finite type are collected in \cite{Chen-book}.

The notion of finite type immersion is naturally extended to the
Gauss map $G$ on $M$ by  B.-Y. Chen and  P. Piccinni  in \cite{CP}, where they introduced
the problem ``\textit{To what extent does the type of the Gauss map of a submanifold of $\mathbb E^m$ determine the submanifold?''}. A submanifold $M$ of an  Euclidean (or pseudo-Euclidean)
space is said to have  \emph{1-type Gauss map} $G$, if $G$
satisfies $\Delta G=a (G+C)$ for some $a \in \mathbb{R}$ and some
constant vector $C$.

A submanifold $M$  is
said to have \emph{pointwise 1-type Gauss map} if its Gauss map
$G$ satisfies
\begin{equation} \label{PW1typeEq1}
\Delta G=\phi (G+C)
\end{equation}
for some non-zero smooth function $\phi $ on $M$ and some
constant vector $C$  \cite{CCK}. A pointwise 1-type Gauss map is called
\emph{proper} if the function $\phi $  is non-constant. A
submanifold with pointwise 1-type Gauss map is said to be of \emph{first kind} if the vector $C$ is zero. Otherwise, it is said
to be of \emph{second kind}.

Classification results on surfaces with pointwise 1-type Gauss map
in Minkowski space have been obtained in the last few years. For
example, in \cite{KY3} Y.  Kim and  D. Yoon studied ruled surfaces with
1-type Gauss map in Minkowski space $\E^m_1$ and gave a complete
classification of null scrolls with 1-type Gauss map. The
classification of ruled surfaces with pointwise 1-type Gauss map
of first kind in Minkowski space $\E^3_1$ is given in
\cite{KY2}. Ruled surfaces with pointwise 1-type Gauss map  of
second kind in Minkowski 3-space were classified in \cite{CKY}.

The complete classification of flat rotation surfaces with  pointwise
1-type Gauss map  in the 4-dimensional pseudo-Euclidean space
$\E^4_2$ is given in \cite{KY2-a}. A classification of
flat Moore type rotational surfaces in terms of the type of their Gauss map is obtained in \cite{Aksoyak15}.
Recently, Arslan and the first author have obtained
 a classification of meridian surfaces with pointwise 1-type Gauss map  \cite{Arslanetall2014}.
The classification of marginally trapped surfaces with pointwise 1-type Gauss
map in Minkowski 4-space is  given in \cite{Mil} and \cite{Turg}.

In the present paper we study quasi-minimal Lorentz surfaces in $\E^4_2$ with pointwise 1-type Gauss
map. First we describe the quasi-minimal surfaces with harmonic Gauss map proving that each such surface is a flat surface with parallel mean curvature vector field.
Next we give explicitly all flat quasi-minimal surfaces with pointwise 1-type Gauss map (Theorem \ref{PropFlatQuasi}).
Further, we obtain that a non-flat quasi-minimal surface with flat normal connection has
pointwise 1-type Gauss map if and only if it has parallel mean
curvature vector field  (Theorem \ref{THMNonFlat}).
We give necessary and sufficient conditions for a quasi-minimal surface with non-flat normal connection to have pointwise 1-type Gauss map.
In  Theorem \ref{THMNONFLATNORMALMainTheo} we present the complete classification of quasi-minimal surface with non-flat normal connection and pointwise 1-type Gauss map.
At the end of the paper we give an explicit example of a quasi-minimal surface with non-flat normal connection and pointwise 1-type Gauss map. This is also an example of a quasi-minimal surface with non-parallel mean curvature vector field.

\section{Preliminaries}

Let $\E^m_s$ be the pseudo-Euclidean $m$-space endowed with the
canonical pseudo-Euclidean metric of index $s$ given by
$$g_0 =  \sum_{i=1}^{m-s} dx_i^2 - \sum_{j=m-s+1}^{m} dx_j^2,$$
where $x_1, x_2, \hdots, x_m$  are rectangular coordinates of the
points of $\mathbb E^m_s$. As usual, we denote by $\langle \, , \rangle$ the
indefinite inner scalar product with respect to $g_0$.

A non-zero vector $v$ is said to be  \emph{spacelike} (respectively, \emph{timelike}) if $\langle v, v \rangle > 0$ (respectively, $\langle v, v \rangle < 0$).
A vector $v$ is called \emph{lightlike} if it is nonzero and satisfies $\langle v, v \rangle = 0$.

We use the following denotations:
 $$\mathbb{S}^{m-1}_s(1) = \{ v \in \E^m_s: \langle v,v \rangle = 1\},$$
$$\mathbb{H}^{m-1}_{s-1}(-1) = \{ v \in \E^m_s: \langle v,v \rangle = - 1\}.$$
$\mathbb{S}^{m-1}_s(1)$ and $\mathbb{H}^{m-1}_{s-1}(-1)$ ($m\geq 3$) are complete pseudo-Riemannian manifolds with constant sectional curvatures
$1$ and $-1$, respectively.
The pseudo-Euclidean space $\E^m_1$ is known as the \emph{Minkowski $m$-space},
the space $\mathbb{S}^{m-1}_1(1)$ is known as the \emph{de Sitter space}, and the space  $\mathbb{H}^{m-1}_1(-1)$ is the \emph{hyperbolic space}
(or the \emph{anti-de Sitter space}) \cite{O'N}.

\vskip 1mm The Gauss map $G$ of a submanifold $M^n$ of
$\E^m_s$ is defined as follows. Let $G(n,m)$ be the Grassmannian
manifold consisting of all oriented $n$-planes through the origin
of $\mathbb{E}^m_s$ and $\wedge ^{n}\mathbb{E}^m_s$ be the vector
space obtained by the exterior product of $n$ vectors in
$\mathbb{E}^{m}_s$.
Let $e_{i_1} \wedge \dots \wedge e_{i_n}$ and $f_{j_1} \wedge \dots \wedge f_{j_n}$ be two vectors of $\wedge^{n}\mathbb{E}^{m}_s$.
 The indefinite
inner product on the Grassmannian manifold is defined by
\begin{equation}\notag
\langle e_{i_1} \wedge \dots \wedge e_{i_n}, f_{j_1} \wedge \dots \wedge f_{j_n}  \rangle =
\det \left( \langle e_{i_k}, f_{j_l}  \rangle  \right).
\end{equation}
Thus, in a natural way, we can identify $\wedge
^{n}\mathbb{E}^{m}_s$ with some pseudo-Euclidean space $\mathbb{E}^{N}_k$,
where $N=\left(
\begin{array}{c}
m \\
n
\end{array}
\right)$, and $k$ is a positive integer.
 Let $\left\{ e_{1},...,e_{n},e_{n+1},\dots,e_{m}\right\} $ be a
 local orthonormal frame field in $\mathbb{E}^{m}_s$ such that $e_{1},e_{2},\dots,$ $e_{n}$ are tangent to $M^n$ and
 $e_{n+1},e_{n+2},\dots,e_{m}$ are
 normal to $M^n$.
The map $G:M^n \rightarrow G(n,m)$ defined by $%
G(p)=(e_{1}\wedge e_{2}\wedge \dots \wedge $ $e_{n})(p)$ is called the \emph{Gauss
map} of $M^n$. It is a smooth map which carries a point $p$ of $M^n$ into the
oriented $n$-plane in $\mathbb{E}^{m}_s$ obtained by the parallel translation
of the tangent space of $M^n$ at $p$ in $\mathbb{E}^{m}$ \cite{KY2-a}.
See also \cite{Turg} for detailed information about definition and geometrical interpretation of the Gauss map of submanifolds.

For any real valued  function $\varphi$ on $M^n$ the Laplacian $\Delta \varphi$ of $\varphi$ is given by the formula
\begin{equation}\notag
\Delta \varphi =-\sum_{i} \varepsilon_i (\widetilde\nabla_{e_{i}}\widetilde\nabla_{e_{i}}\varphi -\widetilde\nabla_{\widetilde\nabla_{e_{i}}e_{i}}\varphi ),
\end{equation}
where $\varepsilon_i = \langle e_i \wedge e_i \rangle = \pm 1$, $\widetilde\nabla$ is the Levi-Civita connection of $\E^m_s$  and $\nabla$ is the induced connection on $M^n$.

\vskip 2mm
In the present paper we consider the  pseudo-Euclidean 4-dimensional space $\E^4_2$ with the canonical pseudo-Euclidean metric of index 2. In this case, the metric $g_0$ becomes $g_0 = dx_1^2 + dx_2^2 - dx_3^2 - dx_4^2$.
A surface $M^2_1$ in $\E^4_2$ is called \emph{Lorentz}  if the
induced  metric $g$ on $M^2_1$ is Lorentzian. So, at each point $p\in M^2_1$ we have the following decomposition
$$\E^4_2 = T_pM^2_1 \oplus N_pM^2_1$$
with the property that the restriction of the metric
onto the tangent space $T_pM^2_1$ is of
signature $(1,1)$, and the restriction of the metric onto the normal space $N_pM^2_1$ is of signature $(1,1)$.

We denote by $\nabla$ and $\widetilde\nabla$ the Levi Civita connections of $M^2_1$  and $\E^4_2$, respectively.
For vector fields $X$, $Y$  tangent to $M^2_1$ and a vector field $\xi$ normal to $M^2_1$, the formulas of Gauss and Weingarten,
giving a decomposition of the vector fields $\widetilde\nabla_XY$ and
$\widetilde\nabla_X \xi$ into tangent and  normal components, are given respectively by \cite{Chen1}:
$$\begin{array}{l}
\vspace{2mm}
\widetilde\nabla_XY = \nabla_XY + h(X,Y);\\
\vspace{2mm}
\widetilde\nabla_X \xi = - A_{\xi} X + D_X \xi.
\end{array}$$

These formulas define the second fundamental form $h$, the normal
connection $D$,  and the shape operator $A_{\xi}$ with respect to
$\xi$. For each normal vector field $\xi$, the shape operator $A_{\xi}$ is a symmetric endomorphism of the tangent space $T_pM^2_1$ at $p \in M^2_1$. In general, $A_{\xi}$ is not diagonalizable.
It is well known that the shape operator and the second fundamental form are related by the formula
$$\langle h(X,Y), \xi \rangle = \langle A_{\xi} X, Y \rangle$$
for $X$, $Y$ tangent to $M^2_1$ and $\xi$ normal to $M^2_1$.

The mean curvature vector  field $H$ of $M^2_1$ in $\E^4_2$
is defined as $H = \ds{\frac{1}{2}\,  \tr\, h}$.
The  surface $M^2_1$  is called \emph{minimal} if its mean curvature vector vanishes identically, i.e. $H =0$.
The surface $M^2_1$  is called \emph{quasi-minimal} if its
mean curvature vector is lightlike at each point, i.e. $H \neq 0$ and $\langle H, H \rangle =0$.
Obviously, quasi-minimal surfaces are always non-minimal.

A normal vector field $\xi$ on $M^2_1$ is called \emph{parallel in the normal bundle} (or simply \emph{parallel}) if $D{\xi}=0$ holds identically \cite{Chen2}.
The surface $M^2_1$ is said to have \emph{parallel mean curvature vector field} if its mean curvature vector $H$
satisfies $D H =0$ identically.

\section{Classification of quasi-minimal surfaces with pointwise 1-type Gauss map} \label{S:Classification}

In this section we study quasi-minimal surfaces in pseudo-Euclidean space $\E^4_2$. We obtain complete classification of  quasi-minimal surfaces  with pointwise 1-type Gauss map.

\subsection{Moving frame on a quasi-minimal surface} \label{SubSect:ConForm}

Let $M^2_1$ be a Lorentz surface in $\E^4_2$. Then, locally there exists a  coordinate system $(u,v)$ on $M^2_1$ such that the  metric tensor  is given by
$$g=-f^2(u,v)(du \otimes dv +dv \otimes du)$$ for some positive function $f(u,v)$ \cite{Chen2}.
Thus,  putting $x=f^{-1}\frac{\partial}{\partial u}$ and $y=f^{-1}\frac{\partial}{\partial v}$, we obtain a pseudo-orthonormal frame field $\{x, y\}$ of the tangent bundle of $M^2_1$ such that $\langle x, x\rangle = 0$, $\langle y, y\rangle = 0$, $\langle x, y\rangle = -1$.
Then the mean curvature vector field $H$ is given by
$$H = - h(x,y).$$

Now, let $M^2_1$ be  quasi-minimal, i.e. its mean curvature vector is lightlike at each point. Then there exists a pseudo-orthonormal frame field  $\{n_1,n_2\}$ of the normal bundle   such that $n_1 = -H$,  $\langle n_1, n_1 \rangle = 0$, $\langle n_2, n_2 \rangle = 0$, $\langle n_1, n_2 \rangle = -1$.

By a direct computation we obtain the following derivative formulas:
\begin{subequations}\label{LeviCivitaConnection1ALL}
\begin{eqnarray}
\label{LeviCivitaConnection1a} \widetilde\nabla_xx=\gamma_1x+an_1+bn_2, & \qquad &\widetilde\nabla_yx=-\gamma_2x+n_1,\\
\label{LeviCivitaConnection1b} \widetilde\nabla_xy=-\gamma_1y+n_1, &\qquad &\widetilde\nabla_yy=\gamma_2y+cn_1+dn_2,\\
\label{LeviCivitaConnection1c} \widetilde\nabla_xn_1=-by+\beta_1n_1, &\qquad &\widetilde\nabla_yn_1=-dx+\beta_2n_1,\\
\label{LeviCivitaConnection1d} \widetilde\nabla_xn_2=-x-ay-\beta_1n_2, &\qquad &\widetilde\nabla_yn_2=-cx-y-\beta_2n_2
\end{eqnarray}
\end{subequations}
for some smooth functions $a,b,c,d$, $\beta_1$, and $\beta_2$, where $\gamma_1=\frac{f_u}{f^2}$ and $\gamma_2=\frac{f_v}{f^2}.$
Thus, the Gaussian curvature $K$ and the normal curvature $\varkappa$ of $M^2_1$ are
\begin{eqnarray}
\label{GaussianEQ} K = -R(x,y,y,x)= x(\gamma_2) + y(\gamma_1) +2\gamma_1 \gamma_2, \notag\\
\label{kappaeq} \varkappa = -R^D(x,y,n_1,n_2)= x(\beta_2) - y(\beta_1) + \gamma_1 \beta_2 - \gamma_2 \beta_1,
\end{eqnarray}
where  $R$ and $R^D$ are the curvature tensors associated with the connections $\nabla$  and $D$, respectively.

\begin{lem}\label{Lemma-parallel H}
If $M^2_1$ is a quasi-minimal surface with parallel mean curvature vector field, then $M^2_1$ has flat normal connection.
\end{lem}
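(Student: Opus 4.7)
The plan is short and computational: extract $Dn_1$ from the derivative formulas \eqref{LeviCivitaConnection1c}, use $n_1 = -H$ to translate $DH=0$ into conditions on $\beta_1,\beta_2$, and then read off $\varkappa = 0$ from the formula displayed just before the lemma.

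More precisely, I would first observe that the normal components of $\widetilde\nabla_x n_1$ and $\widetilde\nabla_y n_1$ in \eqref{LeviCivitaConnection1c} are $\beta_1 n_1$ and $\beta_2 n_1$, respectively, so that $D_x n_1 = \beta_1 n_1$ and $D_y n_1 = \beta_2 n_1$. Since $H = -n_1$, the parallelism assumption $DH = 0$ is equivalent to $Dn_1 = 0$, which gives $\beta_1 n_1 = 0$ and $\beta_2 n_1 = 0$. Because $n_1$ is lightlike and nonzero at every point of $M^2_1$, this forces $\beta_1 = \beta_2 = 0$ identically.

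Substituting these identities into the expression for the normal curvature,
\begin{equation*}
\varkappa = x(\beta_2) - y(\beta_1) + \gamma_1 \beta_2 - \gamma_2 \beta_1,
\end{equation*}
immediately yields $\varkappa \equiv 0$, which is exactly the statement that $M^2_1$ has flat normal connection.

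There is essentially no obstacle here: the whole argument rests on the fact that the pseudo-orthonormal pair $\{n_1, n_2\}$ makes $n_1$ a nonvanishing linearly independent normal vector, so the coefficients $\beta_1,\beta_2$ in $Dn_1$ are uniquely determined and must vanish separately. The only point worth stating carefully is that $H\neq 0$ everywhere (since $M^2_1$ is quasi-minimal, not minimal), which ensures $n_1 \neq 0$ and legitimizes passing from $\beta_i n_1 = 0$ to $\beta_i = 0$.
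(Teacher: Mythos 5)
Your proof is correct and follows exactly the paper's own argument: read off $D_xH=\beta_1H$, $D_yH=\beta_2H$ from \eqref{LeviCivitaConnection1c}, conclude $\beta_1=\beta_2=0$ from parallelism (using that $H\neq 0$), and substitute into \eqref{kappaeq} to get $\varkappa=0$. No issues.
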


\begin{proof}
It follows from  \eqref{LeviCivitaConnection1c} that $D_xH=\beta_1H$, $D_yH=\beta_2H$. Hence, the mean curvature vector field $H$ is parallel if and only if $\beta_1 = \beta_2 = 0$. Now, under the assumption  $\beta_1 = \beta_2 = 0$,  equality \eqref{kappaeq} implies $\varkappa = 0$. Therefore, if $H$ is parallel then the surface has flat normal connection.

\end{proof}

Using the  equations of Gauss and Codazzi, from formulas  \eqref{LeviCivitaConnection1ALL}
we obtain the following integrability conditions:
\begin{subequations}\label{IntEqAll}
\begin{eqnarray}
\label{IntEq4} x(c)=-c\beta_1-2c\gamma_1+\beta_2,\\
\label{IntEq5} x(d)=d\beta_1-2d\gamma_1,\\
\label{IntEq2} y(a)=-a\beta_2-2a\gamma_2+\beta_1,\\
\label{IntEq3} y(b)=b\beta_2-2b\gamma_2,\\
\label{IntEq1} x(\gamma_2) + y(\gamma_1) + 2\gamma_1\gamma_2 = ad +bc,\\
\label{IntEq6} x(\beta_2) - y(\beta_1) - \beta_1 \gamma_2 + \beta_2 \gamma_1  = ad - bc.
\end{eqnarray}
\end{subequations}

Equalities  \eqref{IntEq1} and \eqref{IntEq6} imply that the Gauss curvature $K$ and the normal curvature $\varkappa$ are expressed as follows:
\begin{eqnarray}
\label{GaussianEQ-a} K = ad + bc,\\
\label{kappaeq-a} \varkappa = ad - bc.
\end{eqnarray}

\begin{rem}\label{Casebd0}
If $b=d=0$, then \eqref{GaussianEQ-a} implies $K=0$, i.e. $M^2_1$ is flat. In \cite[Theorem 4.1]{Chen-JMAA}, B.-Y. Chen obtained complete classification of flat quasi-minimal surfaces in  $\E^4_2$. Considering the proof of this theorem, one can see that a quasi-minimal surface in $\E^4_2$ satisfying $b=d=0$ is congruent to a surface parametrized by
\begin{equation}\label{PropSpecSurfPosVec}
z(u,v)=\left(\theta(u,v),\frac{u-v}{\sqrt2},\frac{u+v}{\sqrt2},\theta(u,v)\right)
\end{equation}
for a smooth function $\theta$.
\end{rem}

\begin{lem}\label{LemmaConstGauss}
Let $M^2_1$ be a quasi-minimal surface in  $\mathbb E^4_2$ with parallel mean curvature vector field. If $M^2_1$ has constant Gauss curvature, then  $M^2_1$ is flat.
\end{lem}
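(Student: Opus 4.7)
The first move is to invoke Lemma~\ref{Lemma-parallel H}: since $H$ is parallel, $\beta_1=\beta_2=0$ and the normal connection is flat, so \eqref{kappaeq-a} gives $ad=bc$. Then
\[
K = ad+bc = 2ad = 2bc
\]
is constant by hypothesis. The case $b=d=0$ is handled immediately by Remark~\ref{Casebd0}, so I will assume at least one of $b,d$ is nonzero and argue by contradiction: suppose $K$ is a nonzero constant. Then $a,b,c,d$ are all nowhere zero.

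Next, with $\beta_1=\beta_2=0$, the Codazzi equations \eqref{IntEq4}--\eqref{IntEq3} reduce to
\[
x(c)=-2c\gamma_1,\quad x(d)=-2d\gamma_1,\quad y(a)=-2a\gamma_2,\quad y(b)=-2b\gamma_2,
\]
but notably give no information about the missing derivatives $x(a)$ and $y(d)$. The key step will be to supply them by differentiating $2ad=K$: applying $x$ and combining with the formula for $x(d)$ above yields $x(a)=2a\gamma_1$, and applying $y$ and combining with the formula for $y(a)$ yields $y(d)=2d\gamma_2$.

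Finally, I will apply the torsion-free identity $[x,y]=\nabla_x y-\nabla_y x=\gamma_2 x-\gamma_1 y$, read off from \eqref{LeviCivitaConnection1a}--\eqref{LeviCivitaConnection1b}, to the function $a$. Computing $x(y(a))-y(x(a))$ using $x(a)=2a\gamma_1$ and $y(a)=-2a\gamma_2$ should produce $-2a(x(\gamma_2)+y(\gamma_1))$, whereas $(\gamma_2 x-\gamma_1 y)(a)$ produces $4a\gamma_1\gamma_2$. Equating the two and cancelling the nonzero factor $a$ will force
\[
x(\gamma_2)+y(\gamma_1)=-2\gamma_1\gamma_2,
\]
and substituting into \eqref{IntEq1} makes $K$ vanish, contradicting the standing assumption.

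The main subtlety, I expect, will be recognizing that the Codazzi system \eqref{IntEqAll} alone is underdetermined: it leaves $x(a)$ and $y(d)$ free, so one must exploit the scalar equation $K=\mathrm{const}$ to pin them down. Once these mixed derivatives are in hand, the commutator identity applied to $a$ closes the argument mechanically.
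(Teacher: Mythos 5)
Your proof is correct. The first half coincides with the paper's: invoke Lemma~\ref{Lemma-parallel H} to get $\beta_1=\beta_2=0$ and $ad=bc$, hence $K=2ad$, and then extract the missing derivative $x(a)=2a\gamma_1$ from the constancy of $K$ (the paper reaches the same identity slightly more indirectly, by introducing $\alpha=a/c=b/d$ and first deriving $x(\alpha)=4\alpha\gamma_1$ from $\alpha cd=\mathrm{const}$; your direct differentiation of $2ad=K$ is cleaner and equally valid, since $d\neq0$ everywhere under the assumption $K\neq0$). Where you genuinely diverge is in the finish. The paper translates $x(a)=2a\gamma_1$ and $y(a)=-2a\gamma_2$ into the coordinate PDEs $a_u/a=2f_u/f$ and its $v$-counterpart, integrates them to get $a=\varphi(v)f^2$ and then $\partial_u(f_v/f)=0$, concludes $f(u,v)=f_1(u)f_2(v)$, and finally computes $K=(2ff_{uv}-2f_uf_v)/f^4=0$ from the explicit formula. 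You instead impose the integrability condition for the overdetermined system on $a$ via the torsion-free identity $[x,y]=\gamma_2x-\gamma_1y$: the computation $x(y(a))-y(x(a))=-2a\bigl(x(\gamma_2)+y(\gamma_1)\bigr)$ against $[x,y](a)=4a\gamma_1\gamma_2$ forces $x(\gamma_2)+y(\gamma_1)=-2\gamma_1\gamma_2$, and \eqref{IntEq1} then gives $K=0$ directly. Both arguments encode the same obstruction, but yours stays frame-intrinsic and avoids solving any differential equation, while the paper's explicit integration has the side benefit of exhibiting the product structure of the conformal factor $f$. One cosmetic remark: your opening appeal to Remark~\ref{Casebd0} for the case $b=d=0$ is unnecessary, since the contradiction argument already begins by assuming $K\neq0$, which forces all of $a,b,c,d$ to be nowhere zero; you never actually need to split off that case.
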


\begin{proof}
Let $M^2_1$ be a quasi-minimal surface with parallel mean curvature vector field and constant Gaussian curvature $K_0$.
Assume that $K_0 \neq 0$. Since  $M^2_1$ has parallel mean curvature vector, according to Lemma
\ref{Lemma-parallel H} it has flat normal connection. Therefore, because of \eqref{kappaeq-a}, we have
$ad = bc$ and hence the Gauss curvature is $K_0 = 2 ad$. Since $K_0 \neq 0$ we have that
 $a,b,c,d$ do not vanish and satisfy
\begin{equation}\label{LemmaConstGaussEq2}
a=\alpha c, \quad b=\alpha  d
\end{equation}
for a smooth function $\alpha(u,v)$. Note that  $\alpha$ does not vanish and
$\alpha cd=\mbox{const}$.

In addition, since  $M^2_1$ has parallel mean curvature vector field, we have $\beta_1=\beta_2=0$. Therefore, equalities \eqref{IntEq2}, \eqref{IntEq4}, \eqref{IntEq5} take the form
\begin{subequations}\label{IntEqaaaAll}
\begin{eqnarray}
\label{IntEq4a} x(c)=-2c\gamma_1,\\
\label{IntEq5a} x(d)=-2d\gamma_1,\\
\label{IntEq2a} y(a)=-2a\gamma_2.
\end{eqnarray}
\end{subequations}
Equalities \eqref{IntEq4a}, \eqref{IntEq5a} together with \eqref{LemmaConstGaussEq2} imply that
\begin{equation}\label{LemmaConstGaussEq3}
x(\alpha)=4\alpha\gamma_1.
\end{equation}
Applying $x$ to the first equality of \eqref{LemmaConstGaussEq2} and using \eqref{IntEq4a} and \eqref{LemmaConstGaussEq3}
  we obtain $x(a)=2 a \gamma_1$. Having in mind that $\gamma_1 = \frac{f_u}{f^2}$, we get
\begin{equation*}\label{LemmaConstGaussEq4}
\frac{a_u}{a}=2\frac{f_u}{f},
\end{equation*}
which implies $a(u,v)= \varphi(v) f^2(u,v)$ for a smooth function $\varphi(v)$. Now using \eqref{IntEq2a} we get
$\displaystyle \frac{\varphi'(v)}{4\varphi(v)}= -\frac{f_v}{f}$. Since $\varphi$ is a function of  $v$, we obtain $\displaystyle \frac{\partial}{\partial u}\left(\frac{f_v}{f}\right)=0$. Solving this equation, we get $f(u,v)=f_1(u) f_2(v)$ for some smooth functions $f_1(u), \ f_2(v)$. On the other hand, the Gauss curvature is expressed by the function $f(u,v)$ according to the formula
$K =\frac{2f f_{uv} - 2 f_u f_v}{f^4}$. Hence, $K_0=0$, which contradicts the assumption $K_0 \neq 0$.

\end{proof}

\subsection{Gauss map of quasi-minimal surfaces} \label{SubSect:MainPARTY}

Let $M^2_1$ be a quasi-minimal surface in $\E^4_2$. The Gauss map of $M^2_1$ is defined by
\begin{equation*}\label{MinkGaussTasvTanim}
\begin{array}{rcl}
G: M &\rightarrow & \mathbb H^{5}_3(-1)\subset \mathbb E^6_4\\
p & \mapsto & G(p)=(x\wedge y)(p).
\end{array}
\end{equation*}
We shall use the frame field $\{x,y,n_1,n_2\}$ defined in the previous subsection. This frame field generates the following frame of the Grassmanian manifold:
$$\{x \wedge y, x \wedge n_1, x \wedge n_2,
y \wedge n_1, y \wedge n_2, n_1 \wedge n_2\},$$
for which we have
\begin{equation} \notag
\begin{array}{lll}
\vspace{2mm}
\langle x \wedge y, x \wedge y  \rangle = -1, & \qquad \langle x \wedge n_1, x \wedge n_1  \rangle = 0, & \qquad \langle x \wedge n_2, x \wedge n_2  \rangle = 0,\\
\vspace{2mm}
\langle y \wedge n_1, y \wedge n_1  \rangle = 0, & \qquad \langle y \wedge n_2, y \wedge n_2  \rangle = 0, & \qquad \langle n_1 \wedge n_2, n_1 \wedge n_2  \rangle = - 1,\\
\langle x \wedge n_1, y\wedge n_2  \rangle = 1, & \qquad \langle x \wedge n_2, y \wedge n_1  \rangle = 1, &
\end{array}
\end{equation}
and all other scalar products are equal to zero.

Since $\langle x, x\rangle = \langle y, y\rangle = 0, \langle x, y\rangle = -1$,
the Laplace operator $\Delta:C^\infty(M^2_1)\rightarrow C^\infty(M^2_1)$ of $M^2_1$ takes the form
$$\Delta \varphi = \widetilde\nabla_x \widetilde\nabla_y \varphi + \widetilde\nabla_y \widetilde\nabla_x \varphi - \widetilde\nabla_{\nabla_x y} \varphi - \widetilde\nabla_{\nabla_y x} \varphi$$
for any real valued function $\varphi$.

Hence, the Laplacian of the Gauss map is given by the
formula
\begin{equation*}\label{Eq-7}
\Delta G = \widetilde\nabla_x \widetilde\nabla_y G + \widetilde\nabla_y \widetilde\nabla_x G - \widetilde\nabla_{\nabla_x y} G - \widetilde\nabla_{\nabla_y x} G.
\end{equation*}
By a direct computation, using \eqref{LeviCivitaConnection1ALL}, \eqref{IntEqAll}, \eqref{GaussianEQ-a}, and \eqref{kappaeq-a}, we obtain
\begin{equation}\label{E42MargTrapGMLap}
\Delta G = -2 K x\wedge y + 2\varkappa \,n_1\wedge n_2 + 2\beta_2 x\wedge n_1 - 2\beta_1  y\wedge n_1.
\end{equation}

The next proposition follows directly from formula \eqref{E42MargTrapGMLap}.

\begin{prop}\label{THMGMHarmonic}
Let $M^2_1$ be a quasi-minimal surface in the pseudo-Euclidean space $\E^4_2$. Then, $M^2_1$ has harmonic Gauss map if and only if  $M^2_1$  is a  flat surface with  parallel mean curvature vector field.
\end{prop}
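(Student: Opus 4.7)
The plan is to read the statement off directly from the identity \eqref{E42MargTrapGMLap}, which has already been established. First I would note that the six bivectors
\[
\{x\wedge y,\; x\wedge n_1,\; x\wedge n_2,\; y\wedge n_1,\; y\wedge n_2,\; n_1\wedge n_2\}
\]
form a frame in the ambient space $\wedge^{2}\E^{4}_{2}$ (the table of scalar products in Subsection \ref{SubSect:MainPARTY} shows they pair non-degenerately), so they are pointwise linearly independent. Consequently, the vanishing of the right-hand side of \eqref{E42MargTrapGMLap} is equivalent to the simultaneous vanishing of the four scalar coefficients, namely $K=0$, $\varkappa=0$, $\beta_{1}=0$, $\beta_{2}=0$.

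Next I would translate these scalar conditions into geometric ones. The condition $K=0$ is exactly flatness. For the normal-bundle conditions, I would recall from \eqref{LeviCivitaConnection1c} that $D_{x}n_{1}=\beta_{1}n_{1}$ and $D_{y}n_{1}=\beta_{2}n_{1}$; together with the choice $n_{1}=-H$ fixed in Subsection \ref{SubSect:ConForm}, this gives $D_{x}H=\beta_{1}H$ and $D_{y}H=\beta_{2}H$, so $\beta_{1}=\beta_{2}=0$ is equivalent to $DH=0$, that is, parallel mean curvature vector field. This already yields the forward implication: harmonicity of $G$ forces $M^{2}_{1}$ to be flat with parallel $H$.

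For the converse, assume $M^{2}_{1}$ is flat with parallel mean curvature vector. Then $K=0$ and $\beta_{1}=\beta_{2}=0$ directly. The remaining coefficient $\varkappa$ is handled for free by Lemma \ref{Lemma-parallel H}, which guarantees that a quasi-minimal surface with parallel $H$ has flat normal connection, i.e. $\varkappa=0$; alternatively one reads this off from \eqref{kappaeq} once $\beta_{1}=\beta_{2}=0$. Substituting into \eqref{E42MargTrapGMLap} gives $\Delta G=0$, completing the equivalence. There is really no obstacle here: the entire content of the proposition is packaged in \eqref{E42MargTrapGMLap} together with Lemma \ref{Lemma-parallel H}, and the only thing to verify is the linear independence of the four bivectors appearing as basis elements in that formula.
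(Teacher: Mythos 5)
Your proof is correct and is essentially the argument the paper intends: the paper simply states that the proposition ``follows directly from formula \eqref{E42MargTrapGMLap}'', and your write-up supplies exactly the details implicit in that claim (linear independence of the bivectors, the identification of $\beta_1=\beta_2=0$ with $DH=0$ via \eqref{LeviCivitaConnection1c}, and the observation that $\varkappa=0$ comes for free from Lemma \ref{Lemma-parallel H} or \eqref{kappaeq}). No issues.
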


\begin{rem}
See  \cite{HouYang2010} for the classification of Lorentzian surfaces with parallel mean curvature vector in $\mathbb E^4_2$.
\end{rem}

Further we shall study quasi-minimal surfaces with pointwise 1-type Gauss map, i.e. the Laplacian of $G$ satisfies \eqref{PW1typeEq1} for a smooth non-vanishing function $\phi$ and a constant vector $C$.

Having in mind Lemma \ref{Lemma-parallel H} and Lemma \ref{LemmaConstGauss}, from \eqref{E42MargTrapGMLap} we have the following proposition.

\begin{prop}\label{THMNonFlat1type}
Let $M^2_1$ be a quasi-minimal surface in the pseudo-Euclidean space $\E^4_2$. If $M^2_1$ is a non-flat surface with parallel mean curvature vector field, then it has proper pointwise 1-type Gauss map of first kind. In this case, \eqref{PW1typeEq1} is satisfied for the smooth function $\phi=-2K$.
\end{prop}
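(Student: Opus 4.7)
The plan is essentially to read off the conclusion directly from the explicit formula \eqref{E42MargTrapGMLap} for $\Delta G$, after eliminating all terms that vanish under the hypothesis of parallel mean curvature vector. I would then invoke Lemma \ref{LemmaConstGauss} to upgrade ``pointwise 1-type'' to ``proper.''

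First I would use the derivative formulas \eqref{LeviCivitaConnection1c}, which give $D_xH=\beta_1 H$ and $D_yH=\beta_2 H$. The hypothesis $DH=0$ therefore forces $\beta_1=\beta_2=0$. Combining this with Lemma \ref{Lemma-parallel H} (which asserts that parallel mean curvature vector implies flat normal connection), we additionally obtain $\varkappa=0$. Substituting these three vanishings into formula \eqref{E42MargTrapGMLap} collapses it to
\begin{equation*}
\Delta G = -2K\, x\wedge y = -2K\, G,
\end{equation*}
since $G=x\wedge y$ by the definition of the Gauss map. This is precisely the equation \eqref{PW1typeEq1} with $\phi=-2K$ and $C=0$, which is the ``first kind'' case.

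It remains to verify that the Gauss map is \emph{proper} pointwise 1-type, i.e.\ that $\phi=-2K$ is non-constant. Here is where Lemma \ref{LemmaConstGauss} does the work: under the standing assumption that $H$ is parallel, constant Gaussian curvature would force $M^2_1$ to be flat, contradicting the non-flatness hypothesis of the proposition. Hence $K$ is non-constant, so $\phi$ is non-constant, completing the argument.

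I do not expect any serious obstacle: once the two preparatory lemmas are in place, the statement is a one-line substitution into \eqref{E42MargTrapGMLap} followed by a direct appeal to Lemma \ref{LemmaConstGauss}. The only conceptual point worth flagging is to be explicit that the definition of pointwise 1-type (equation \eqref{PW1typeEq1}) is matched exactly with $C=0$, so the classification as ``first kind'' is immediate from the form of the reduced expression rather than requiring any additional computation.
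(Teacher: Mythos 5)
Your proof is correct and follows exactly the route the paper intends: the paper gives no explicit proof, stating only that the proposition follows from Lemma \ref{Lemma-parallel H}, Lemma \ref{LemmaConstGauss}, and formula \eqref{E42MargTrapGMLap}, which is precisely the substitution $\beta_1=\beta_2=\varkappa=0$ into $\Delta G$ followed by the appeal to Lemma \ref{LemmaConstGauss} for properness that you carry out. No discrepancies.
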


Now, we shall give the complete classification of quasi-minimal surfaces with pointwise 1-type Gauss map.

Assume that $M^2_1$ has pointwise 1-type Gauss map. Then from \eqref{PW1typeEq1} and \eqref{E42MargTrapGMLap} we get the equality
$$-2Kx\wedge y+2\varkappa n_1\wedge n_2+2\beta_2  x\wedge n_1-2\beta_1 y\wedge n_1=\phi(G+C), \quad \phi \neq 0,$$
which implies
\begin{subequations}\label{PW1TypeEq1ALL}
\begin{eqnarray}
\label{PW1TypeEq1a}\langle C, x\wedge y\rangle&=&1+\frac{2K}\phi,\\
\label{PW1TypeEq1b}\langle C,  n_1\wedge n_2\rangle&=&-\frac{2\varkappa}\phi,\\
\label{PW1TypeEq1c}\langle C, x\wedge n_1\rangle&=&0,\\
\label{PW1TypeEq1d}\langle C, y\wedge n_1\rangle&=&0,\\
\label{PW1TypeEq1e}\langle C, x\wedge n_2\rangle&=&-\frac{2\beta_1}\phi,\\
\label{PW1TypeEq1f}\langle C, y\wedge n_2\rangle&=&\frac{2\beta_2}\phi.
\end{eqnarray}
\end{subequations}


\subsection{Flat quasi-minimal surfaces with pointwise 1-type Gauss map}

In this subsection we give the classification of flat quasi-minimal surfaces in $\E^4_2$ with  pointwise 1-type Gauss map.

Let $M^2_1$ be a flat quasi-minimal surface  with pointwise 1-type Gauss map, i.e., \eqref{PW1typeEq1} is satisfied for a non-zero function $\phi$ and a constant vector $C$. Applying $x$ and $y$ to \eqref{PW1TypeEq1a} and using $K = 0$, we obtain
$\beta_2 b = \beta_1 d = 0$.

Note that if $M^2_1$ has parallel mean curvature vector, then according to Proposition \ref{THMGMHarmonic} the Gauss map is harmonic. Since we consider the non-harmonic case, we have $\beta_1^2+\beta_2^2\neq0$. Hence, $b \,d=0$, i.e. at least one of the functions $b$ or $d$ is zero. If we assume that $b = 0, d \neq 0$, then we get $\varkappa=0, \beta_1 = 0, \beta_2 = 0$. Applying $y$ to \eqref{PW1TypeEq1e} we obtain $-1 = 0$, which is a contradiction. Similarly for the case $d = 0, b \neq 0$. Hence, the only possible case is $b = d =0$.
Thus,  according to Remark 3.2 $M^2_1$ is congruent to the surface given by \eqref{PropSpecSurfPosVec} for some function $\theta(u,v)$. Now, using the condition that the surface has pointwise 1-type Gauss map we will obtain conditions on  $\theta(u,v)$.

We put $\eta_0=\left(1,0,0,1\right)$, $\displaystyle \eta_1=\left(0,\frac{1}{\sqrt2},\frac{1}{\sqrt2},0\right)$ and $\displaystyle \eta_2=\left(0,-\frac{1}{\sqrt2},\frac{1}{\sqrt2},0\right)$.

\begin{thm}\label{PropFlatQuasi}
Let $M^2_1$ be a flat quasi-minimal surface  in the pseudo-Euclidean space $\mathbb E^4_2$. Then, $M^2_1$ has pointwise 1-type Gauss map if and only if it is congruent to the surface given by \eqref{PropSpecSurfPosVec} for a smooth function $\theta$ satisfying
\begin{equation}\label{PropSpecSurfSufNecEq}
\frac{\partial^2\theta(u,v)}{\partial u\partial v}=(F\circ\psi)(u,v), \quad\psi(u,v)=\theta(u,v)+c_1u+c_2v,
\end{equation}
where $F$ is a non-constant function,  $c_1$ and $c_2$ are constants. In this case, \eqref{PW1typeEq1} is satisfied for the smooth function
\begin{equation}\label{PropSpecSurfFuncPHI}
\phi=(F'\circ\psi)
\end{equation}
and the non-zero constant vector
\begin{equation}\label{PropSpecSurfC}
C=c_1\eta_0\wedge\eta_2-c_2\eta_0\wedge\eta_1-\eta_1\wedge\eta_2.
\end{equation}
\end{thm}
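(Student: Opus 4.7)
My plan is to leverage the reduction already carried out just before the theorem: any flat quasi-minimal surface in $\mathbb E^4_2$ whose Gauss map is pointwise $1$-type but not harmonic must satisfy $b=d=0$, so by Remark~\ref{Casebd0} it is congruent to the surface \eqref{PropSpecSurfPosVec} for some smooth $\theta$. Thus the real content of the theorem is to characterize, among these parametrizations, precisely which functions $\theta$ produce a pointwise $1$-type Gauss map, and to identify the accompanying $\phi$ and $C$. First I would set up the frame directly from \eqref{PropSpecSurfPosVec}: the tangent vectors decompose as $z_u=\theta_u\eta_0+\eta_1$ and $z_v=\theta_v\eta_0+\eta_2$, their scalar product is $-1$ (so $f\equiv 1$, $\gamma_1=\gamma_2=0$), and every second derivative $z_{uu},z_{uv},z_{vv}$ is a multiple of the null vector $\eta_0$. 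This automatically makes the surface flat with a lightlike mean curvature vector, and identifies $n_1=-H=\theta_{uv}\eta_0$. Reading off $\widetilde\nabla_x n_1=\theta_{uuv}\eta_0$ and $\widetilde\nabla_y n_1=\theta_{uvv}\eta_0$ in the framework of \eqref{LeviCivitaConnection1c} recovers $b=d=0$ and, crucially, the explicit expressions $\beta_1=\theta_{uuv}/\theta_{uv}$ and $\beta_2=\theta_{uvv}/\theta_{uv}$.

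Next I would plug these data into the general Laplacian formula \eqref{E42MargTrapGMLap}, which collapses (since $K=\varkappa=0$) to
\begin{equation*}
\Delta G \;=\; -2\theta_{uvv}\,\eta_0\wedge\eta_1 \;+\; 2\theta_{uuv}\,\eta_0\wedge\eta_2,
\end{equation*}
while the Gauss map itself decomposes as
\begin{equation*}
G \;=\; z_u\wedge z_v \;=\; -\theta_v\,\eta_0\wedge\eta_1 \;+\; \theta_u\,\eta_0\wedge\eta_2 \;+\; \eta_1\wedge\eta_2.
\end{equation*}
Imposing $\Delta G=\phi(G+C)$ and projecting on the bivectors $\eta_0\wedge\eta_1$, $\eta_0\wedge\eta_2$, $\eta_1\wedge\eta_2$ (the other three basis bivectors receive no contribution) expresses $C$ as
\begin{equation*}
C \;=\; \Bigl(\theta_v-\tfrac{2\theta_{uvv}}{\phi}\Bigr)\eta_0\wedge\eta_1 \;+\; \Bigl(\tfrac{2\theta_{uuv}}{\phi}-\theta_u\Bigr)\eta_0\wedge\eta_2 \;-\;\eta_1\wedge\eta_2.
\end{equation*}

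The constancy of $C$ forces the two coefficients on $\eta_0\wedge\eta_i$ to be constants, which I would label $-c_2$ and $c_1$ respectively. Eliminating $\phi$ between the two resulting equalities yields the single consistency relation $\theta_{uuv}(\theta_v+c_2)=\theta_{uvv}(\theta_u+c_1)$. Setting $\psi=\theta+c_1u+c_2v$, so that $\psi_u=\theta_u+c_1$, $\psi_v=\theta_v+c_2$, $\psi_{uv}=\theta_{uv}$, this relation is exactly the vanishing of the Jacobian $\partial(\psi,\theta_{uv})/\partial(u,v)$; by the usual functional-dependence argument it is equivalent to $\theta_{uv}=F(\psi)$ for some smooth $F$, and the non-vanishing of $\phi$ translates into $F'\neq 0$, i.e.\ $F$ non-constant. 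The formulas \eqref{PropSpecSurfFuncPHI} and \eqref{PropSpecSurfC} for $\phi$ and $C$ then follow by back-substitution, and the converse is a routine verification that starting from $\theta_{uv}=F\circ\psi$ one recovers $\Delta G=\phi(G+C)$ with the stated data.

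The computations themselves are not difficult, so the main obstacle is essentially organizational: handling the six-dimensional Grassmannian bivector expansion of $G$ and $\Delta G$ carefully enough so that comparing coefficients on $\eta_0\wedge\eta_1$, $\eta_0\wedge\eta_2$ and $\eta_1\wedge\eta_2$ isolates the two integration constants $c_1,c_2$. The single conceptually substantive step is recognizing that the relation $\theta_{uuv}\psi_v=\theta_{uvv}\psi_u$ is precisely a vanishing-Jacobian condition and rewriting it as the integrated form $\theta_{uv}=F(\psi)$; everything else reduces to differentiation and substitution.
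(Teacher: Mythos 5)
Your proposal is correct and follows essentially the same route as the paper: both expand $G$ and $\Delta G$ in the bivector basis generated by $\eta_0,\eta_1,\eta_2$, deduce from the constancy of $C$ the two equations $2\zeta_u/\phi-\theta_u=c_1$, $2\zeta_v/\phi-\theta_v=c_2$ with $\zeta=\theta_{uv}$, and eliminate $\phi$ to obtain the functional dependence $\zeta=F(\psi)$ (your vanishing-Jacobian phrasing is the same as the paper's ``$\zeta$ is constant along the level curves of $\psi$''). The only cosmetic difference is that you route the computation of $\Delta G$ through the coefficients $\beta_1=\theta_{uuv}/\theta_{uv}$, $\beta_2=\theta_{uvv}/\theta_{uv}$ in \eqref{E42MargTrapGMLap}, while the paper applies the flat Laplacian directly to \eqref{PropSpecSurfG}; the outcome is identical.
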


\begin{proof}
By a direct computation from \eqref{PropSpecSurfPosVec}  we get
\begin{align}\nonumber
\begin{split}
x=&z_u=\theta_u\eta_0+\eta_1,\\
y=&z_v=\theta_v\eta_0+\eta_2,
\end{split}
\end{align}
which imply
\begin{equation}\label{PropSpecSurfG}
G=\eta_0\wedge(\theta_u\eta_2-\theta_v\eta_1)+\eta_1\wedge\eta_2
\end{equation}
and
\begin{equation}\label{PropSpecSurfDG}
\Delta G=\eta_0\wedge(2\zeta_u\eta_2-2\zeta_v\eta_1),
\end{equation}
where we put $\zeta=\frac{\partial^2\theta}{\partial u\partial v}.$

From  \eqref{PW1typeEq1},  \eqref{PropSpecSurfG}  and  \eqref{PropSpecSurfDG}  we obtain
\begin{align}\nonumber
\begin{split}
\eta_0\wedge\left(\left(2\frac{\zeta_u}{\phi}-\theta_u\right)\eta_2-\left(2\frac{\zeta_v}{\phi}-\theta_v\right)\eta_1\right)=C+\eta_1\wedge\eta_2.
\end{split}
\end{align}
Since the right hand side of this equation is a constant vector, the vector field
$$\eta_0\wedge\left(\left(2\frac{\zeta_u}{\phi}-\theta_u\right)\eta_2-\left(2\frac{\zeta_v}{\phi}-\theta_v\right)\eta_1\right)$$
is also constant. So, we obtain that the  system of equations
\begin{align}\nonumber
\begin{split}
2\frac{\zeta_u}{\phi}-\theta_u = & c_1,\\
2\frac{\zeta_v}{\phi}-\theta_v = & c_2
\end{split}
\end{align}
is satisfied for some constants $c_1$ and $c_2$. The last two equations imply
\begin{align*}\label{PropSpecSurfEq1}
\phi =2\frac{\zeta_u}{c_1+\theta_u}=2\frac{\zeta_v}{c_2+\theta_v}.
\end{align*}
From this equation, one can see that the function $\zeta$ remains constant along the curve $\psi=c$, where $\psi$ is the function given by the second equality in \eqref{PropSpecSurfSufNecEq}. Hence, we have proved that $\theta$ satisfies the first equality  in \eqref{PropSpecSurfSufNecEq}.

Conversely, by a straightforward computation one can see that  \eqref{PW1typeEq1} is satisfied for the constant vector $C$ and the smooth non-zero function $F$ given by \eqref{PropSpecSurfFuncPHI} and \eqref{PropSpecSurfC}.
\end{proof}


\subsection{Quasi-minimal surfaces with flat normal connection}

In this subsection, we focus on quasi-minimal surfaces with $\varkappa=0$.

Let $M^2_1$ be a quasi-minimal surface with flat normal connection and pointwise 1-type Gauss map. Then, \eqref{PW1TypeEq1b} implies $\langle C,  n_1\wedge n_2\rangle=0$.  Applying $x$ and $y$ to the last equation and using \eqref{PW1TypeEq1e} and \eqref{PW1TypeEq1f} we obtain
\begin{equation*}\label{PW1TypeLastEq}
b\beta_2=d\beta_1=0.
\end{equation*}
On the other hand, from \eqref{GaussianEQ-a} and \eqref{kappaeq-a} we have $K=2bc$.
If the Gauss curvature  does not vanish, then $\beta_1 = \beta_2 = 0$ and hence $M^2_1$ has parallel mean curvature vector field. Combining this with Proposition \ref{THMNonFlat1type}, we get the following result.

\begin{thm}\label{THMNonFlat}
Let $M^2_1$ be a quasi-minimal surface in the pseudo-Euclidean space $\mathbb E^4_2$ with flat normal connection and non-vanishing Gauss curvature. Then, $M^2_1$ has pointwise 1-type Gauss map if and only if it has parallel mean curvature vector field. In this case, $M^2_1$ has proper pointwise 1-type Gauss map of  first kind.
\end{thm}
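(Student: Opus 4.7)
The plan is to prove the non-trivial direction: assuming $M^2_1$ has flat normal connection, non-vanishing Gauss curvature, and pointwise 1-type Gauss map, I will show that the mean curvature vector is parallel; the converse is exactly Proposition \ref{THMNonFlat1type}. Recall from the derivation of Lemma \ref{Lemma-parallel H} that $D_xH = \beta_1 H$ and $D_yH = \beta_2 H$, so parallelism of $H$ amounts to $\beta_1 = \beta_2 = 0$, and that is the target.

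The groundwork is already in place in the paragraph preceding the theorem statement. From $\varkappa = 0$, equation \eqref{PW1TypeEq1b} yields $\langle C, n_1 \wedge n_2 \rangle = 0$. The key step is to differentiate this identity along $x$ and $y$. Since $C$ is a constant vector of $\wedge^2\E^4_2$, one has $x\langle C, n_1\wedge n_2\rangle = \langle C, \widetilde{\nabla}_x(n_1\wedge n_2)\rangle$, and the derivative of the bivector $n_1\wedge n_2$ can be expanded using the Weingarten formulas \eqref{LeviCivitaConnection1c}--\eqref{LeviCivitaConnection1d} into a combination of $x\wedge n_1$, $y\wedge n_1$, $y\wedge n_2$ (the $n_1\wedge n_2$ terms cancel by the antisymmetry of the wedge product). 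Substituting the known pairings from \eqref{PW1TypeEq1c}--\eqref{PW1TypeEq1f} collapses the identity to $b\beta_2 = 0$; the analogous computation along $y$ gives $d\beta_1 = 0$.

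To conclude, I invoke the curvature hypotheses. Flat normal connection ($\varkappa = 0$) together with \eqref{kappaeq-a} gives $ad = bc$, so \eqref{GaussianEQ-a} reduces to $K = 2ad = 2bc$. The assumption $K\neq 0$ therefore forces all four coefficients $a, b, c, d$ to be nonzero everywhere, and combining this with $b\beta_2 = d\beta_1 = 0$ yields $\beta_1 = \beta_2 = 0$, i.e.\ $DH = 0$. The statement that the surface has \emph{proper} pointwise 1-type Gauss map of \emph{first kind} then follows from Proposition \ref{THMNonFlat1type}; the hypothesis $K\neq 0$ together with Lemma \ref{LemmaConstGauss} ensures that $K$ is non-constant, so $\phi = -2K$ is non-constant, which is what ``proper'' requires.

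I do not foresee any substantive obstacle; the only care required is the sign bookkeeping when expanding $\widetilde\nabla_x(n_1\wedge n_2)$ and $\widetilde\nabla_y(n_1\wedge n_2)$ into a linear combination of the basis bivectors listed in Subsection \ref{SubSect:MainPARTY}, and in correctly identifying which pairings from \eqref{PW1TypeEq1ALL} survive after substitution.
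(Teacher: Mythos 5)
Your argument is correct and is essentially the proof the paper itself gives: the paper likewise derives $b\beta_2=d\beta_1=0$ by differentiating $\langle C,n_1\wedge n_2\rangle=0$, uses $K=2ad=2bc\neq 0$ to force $\beta_1=\beta_2=0$, and closes with Proposition \ref{THMNonFlat1type}. Your extra remark that Lemma \ref{LemmaConstGauss} is what guarantees $\phi=-2K$ is non-constant (hence ``proper'') is a valid and slightly more explicit justification of a point the paper leaves implicit inside Proposition \ref{THMNonFlat1type}.
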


\begin{rem}
We would like to note that a classification of Lorentz surfaces with parallel mean curvature vector field in the pseudo-Euclidean space $\mathbb E^4_2$ is given in \cite[Theorem 3.1]{HouYang2010}. Considering this theorem and its proof, one can see that there exist two families of quasi-minimal surfaces with proper pointwise 1-type Gauss map of first kind:
\begin{enumerate}
\item[(i)] A non-flat CMC-surface lying in $\mathbb S^3_2(r^2)$ for some $r>0$ such that the
mean curvature vector $H'$ of $M$ in $\mathbb S^3_2(r^2)$ satisfies $\langle H',H'\rangle=-r^2$;
\item[(ii)] A non-flat CMC-surface lying in $\mathbb H^3_1(-r^2)$ for some $r>0$ such that the
mean curvature vector $H'$ of $M$ in $\mathbb H^3_2(-r^2)$ satisfies $\langle H',H'\rangle=r^2$.
\end{enumerate}
Conversely, any quasi-minimal surface with proper pointwise 1-type Gauss map of first kind belongs to one of the above two families.
\end{rem}

\subsection{Quasi-minimal surfaces with non-flat normal connection}
In this subsection we focus on quasi-minimal surfaces with non-flat normal connection and pointwise 1-type Gauss map. Before we proceed, we would like to note that  recent results show that there are no such surfaces if the ambient space is $\mathbb E^4_1$ or $\mathbb S^4_1(1)$ (see \cite{Mil,Turg,Turg2}).

First we prove the following proposition.

\begin{prop}\label{THMNONFLATNORMAL}
Let $M^2_1$ be a quasi-minimal surface in $\mathbb E^4_2$  with non-vanishing normal curvature. Then, $M^2_1$ has pointwise 1-type Gauss map if and only if there exists a local coordinate system $(s,t)$ such that $\bar x=\partial_s$, $\bar y=\widetilde f\partial_s+\partial_t$, $n_1$ and $n_2$ form a pseudo-orthonormal frame field of the tangent bundle of $M^2_1$ and the Levi-Civita connection satisfies
\begin{subequations}\label{LeviCivitaConnectionnonflat1ALL}
\begin{eqnarray}
\label{LeviCivitaConnectionnonflat1a} \widetilde\nabla_{\bar x}\bar x=\widetilde an_1, & \qquad &\widetilde\nabla_{\bar y}{\bar x}=-\widetilde f_s{\bar x}+n_1,\\
\label{LeviCivitaConnectionnonflat1b} \widetilde\nabla_{\bar x}{\bar y}=n_1, &\qquad &\widetilde\nabla_{\bar y}\bar y=\widetilde f_s{\bar y}+\widetilde c n_1+\widetilde dn_2,\\
\label{LeviCivitaConnectionnonflat1c} \widetilde\nabla_{\bar x}n_1=0, &\qquad &\widetilde\nabla_{\bar y}n_1=-\widetilde d{\bar x}+\widetilde\beta_2n_1,\\
\label{LeviCivitaConnectionnonflat1d} \widetilde\nabla_{\bar x}n_2=-{\bar x}-\widetilde a{\bar y}, &\qquad & \widetilde\nabla_{\bar y}n_2=-\widetilde c{\bar x}-{\bar y}-\widetilde \beta_2n_2
\end{eqnarray}
\end{subequations}
where $\widetilde a$, $\widetilde c$, $\widetilde d$, $\widetilde \beta_2$ and $\widetilde f$ are  smooth functions given by
\begin{subequations}\label{LeviCivitaConnectionnonflatInvariants1ALL}
\begin{eqnarray}
\label{LeviCivitaConnectionnonflatInvariants1ad}  \widetilde a&=&\lambda_3\left(-\frac 23 \lambda_1s-\frac 23 \lambda_2\right)^{-3/2}, \qquad\qquad \widetilde d=\frac{1}{\lambda_3},\\
\label{LeviCivitaConnectionnonflatInvariants1cFinal}  \widetilde c&=& -\frac{9}{\lambda_1^2}\left(-\frac 23 \lambda_1s-\frac 23 \lambda_2\right)^{1/2},\\
\label{LeviCivitaConnectionnonflatInvariants1beta} \widetilde\beta_2&=&\frac{3}{\lambda_1}\left(-\frac 23 \lambda_1s-\frac 23 \lambda_2\right)^{-1/2},\\
\label{LeviCivitaConnectionnonflatInvariants1fFinal} \widetilde f&=&-\frac{9}{\lambda_1^2}\left(-\frac 23 \lambda_1s-\frac 23 \lambda_2\right)^{1/2}-\left(2\frac{\lambda_3'}{\lambda_3}-3\frac{\lambda_1'}{\lambda_1}\right)s-\frac{\lambda_2'}{\lambda_1}-2\frac {\lambda_2}{\lambda_1}\frac{\lambda_3'}{\lambda_3}+4\frac {\lambda_2}{\lambda_1}\frac{\lambda_1'}{\lambda_1}
\end{eqnarray}
\end{subequations}
for  some non-vanishing smooth functions $\lambda_1,\lambda_3$ and a smooth function $\lambda_2$.

In this case, $M^2_1$ has Gauss  curvature
\begin{eqnarray}
\label{NewCoordPW1Cond1Gaussian} K&=&\left(-\frac 23 \lambda_1s-\frac 23 \lambda_2\right)^{-3/2}.
\end{eqnarray}
Moreover, \eqref{PW1typeEq1} is satisfied for
\begin{equation}\label{TheoremnonflatEQUfC}
\phi=-4K \quad\mbox{ and}\quad C=-\frac12\left(\bar x \wedge \bar y +n_1\wedge n_2+\frac {\widetilde \beta_2}{K}\bar x\wedge n_1\right).
\end{equation}
\end{prop}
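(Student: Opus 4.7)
The plan is to extract from the pointwise 1-type condition and the integrability equations enough algebraic constraints on the connection coefficients $a,b,c,d,\beta_1,\beta_2$ to select an adapted coordinate system, and then integrate what remains. Starting from \eqref{PW1TypeEq1a}--\eqref{PW1TypeEq1f} with $\varkappa\neq 0$, I would first apply $x,y$ using \eqref{LeviCivitaConnection1ALL}: differentiating \eqref{PW1TypeEq1c} along $x$ gives $b(2\varkappa-2K-\phi)=0$, and differentiating \eqref{PW1TypeEq1d} along $y$ gives $d(2\varkappa+2K+\phi)=0$. Combined with $\varkappa=ad-bc\neq 0$, these force exactly one of $b,d$ to vanish; after relabelling the two null directions I may assume $b=0$, which yields $K=\varkappa=ad$ and $\phi=-4K$, already matching \eqref{TheoremnonflatEQUfC}. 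Next, differentiating \eqref{PW1TypeEq1e} along $x$ and $y$ produces $x(\beta_1)=\beta_1(\gamma_1-\beta_1+x(\log K))$ and $y(\beta_1)=\beta_1(-\gamma_2-\beta_2+y(\log K))$. Computing $[x,y]\beta_1$ two ways---once via $[x,y]=\gamma_2x-\gamma_1y$ and once as $x(y(\beta_1))-y(x(\beta_1))$---and then using \eqref{IntEq1} and \eqref{IntEq6} to evaluate the resulting combination, the branch $\beta_1\neq 0$ collapses to $K=0$, contradicting $\varkappa\neq 0$. Hence $\beta_1\equiv 0$.

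With $b=\beta_1=0$ the derivative formulas simplify, and in particular $\widetilde\nabla_xn_1=0$, so the integral curves of $x$ are geodesics. Choosing an affine parameter $s$ along them gives $\bar x=\partial_s$ and $\gamma_1=0$; taking $t$ as a transversal coordinate, the other null vector becomes $\bar y=\widetilde f\,\partial_s+\partial_t$, with $\widetilde f_s$ equal to the (relabelled) $\gamma_2$. Substituting back into \eqref{LeviCivitaConnection1ALL} produces exactly \eqref{LeviCivitaConnectionnonflat1ALL}. The remaining integrability conditions \eqref{IntEq2}, \eqref{IntEq4}--\eqref{IntEq6} then reduce to ODEs in $s$ with $t$-dependent constants of integration: \eqref{IntEq5} gives $\widetilde d_s=0$, so $\widetilde d=1/\lambda_3(t)$; \eqref{IntEq6} determines the $s$-dependence of $\widetilde\beta_2$, introducing a second free function $\lambda_2(t)$; the Gauss equation $K=\widetilde a\widetilde d$ then fixes $\widetilde a$ in the form \eqref{LeviCivitaConnectionnonflatInvariants1ad}; and \eqref{IntEq4} together with the coordinate-compatibility identity forced by $[\partial_s,\partial_t]=0$ produces \eqref{LeviCivitaConnectionnonflatInvariants1fFinal}, with a third function $\lambda_1(t)$. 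Collecting all the pieces yields \eqref{LeviCivitaConnectionnonflatInvariants1ALL} and \eqref{NewCoordPW1Cond1Gaussian}.

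For the converse, assume \eqref{LeviCivitaConnectionnonflat1ALL}--\eqref{LeviCivitaConnectionnonflatInvariants1ALL}. A direct computation of $\varkappa=\bar x(\widetilde\beta_2)$ using \eqref{LeviCivitaConnectionnonflatInvariants1beta} gives $K$ in \eqref{NewCoordPW1Cond1Gaussian}, and substituting into \eqref{E42MargTrapGMLap} yields $\Delta G=-2K\,\bar x\wedge\bar y+2K\,n_1\wedge n_2+2\widetilde\beta_2\,\bar x\wedge n_1$, which equals $\phi(G+C)$ for $\phi$ and $C$ as in \eqref{TheoremnonflatEQUfC}; the actual constancy $\widetilde\nabla_{\bar x}C=\widetilde\nabla_{\bar y}C=0$ is precisely the condition the specific $s$-dependence of \eqref{LeviCivitaConnectionnonflatInvariants1ALL} is engineered to satisfy. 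The main obstacle, in my view, lies in the integration step: the peculiar scalings $(-\tfrac{2}{3}\lambda_1s-\tfrac{2}{3}\lambda_2)^{\pm 1/2,\pm 3/2}$, and especially the expression \eqref{LeviCivitaConnectionnonflatInvariants1fFinal} that intertwines the $\lambda_i$ with their $t$-derivatives $\lambda_i'$, emerge only after careful bookkeeping of the mixed Codazzi and coordinate-compatibility constraints, and pinning down exactly these powers and exactly this combination of $\lambda_i,\lambda_i'$ is the delicate arithmetic of the proof.
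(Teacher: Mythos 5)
Your first half is sound and close to the paper's: applying $x$ to \eqref{PW1TypeEq1c} and $y$ to \eqref{PW1TypeEq1d} does give $b(\langle C,x\wedge y\rangle+\langle C,n_1\wedge n_2\rangle)=0$ and $d(\langle C,x\wedge y\rangle-\langle C,n_1\wedge n_2\rangle)=0$, which with $\varkappa\neq0$ forces exactly one of $b,d$ to vanish, and then $\phi=-4K$ and $K=\varkappa=ad$. Your derivation of $\beta_1\equiv0$ via the commutator $[x,y]\beta_1$ does close (the Gauss and Ricci equations reduce the compatibility condition to $0=-2K$ on the set where $\beta_1\neq0$), though the paper gets it in one line from $y(\langle C,x\wedge y\rangle)=0$, which yields $d\,\langle C,x\wedge n_2\rangle=0$ and hence $\beta_1=0$ by \eqref{PW1TypeEq1e}. (A small slip: the integral curves of $x$ are pre-geodesics of $M^2_1$ because $\nabla_xx=\gamma_1x$, not because $\widetilde\nabla_xn_1=0$; the reparametrization giving $\bar x=\partial_s$ is nevertheless fine.)

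The genuine gap is in the integration step. In the coordinates $(s,t)$ the integrability conditions read $\widetilde c_s=\widetilde\beta_2$, $\widetilde d_s=0$, $(\widetilde\beta_2)_s=\widetilde f_{ss}=K=\widetilde a\widetilde d$, plus the Codazzi equation for $\widetilde a$; these alone leave $K(\cdot,t)$ an essentially arbitrary function of $s$ and cannot produce the law $K=\left(-\tfrac 23\lambda_1 s-\tfrac 23\lambda_2\right)^{-3/2}$ nor the formulas for $\widetilde\beta_2,\widetilde c,\widetilde f$. What pins these down is the constancy of $C=-\tfrac12\left(\bar x\wedge\bar y+n_1\wedge n_2+\tfrac{\widetilde\beta_2}{K}\bar x\wedge n_1\right)$: imposing $\widetilde\nabla_{\bar x}C=\widetilde\nabla_{\bar y}C=0$ gives $\bar x(\widetilde\beta_2/K)=-2$ and $\bar y(\widetilde\beta_2/K)+\tfrac{\widetilde\beta_2}{K}(\widetilde\beta_2-\widetilde f_s)+2\widetilde c=0$; combining the first with $(\widetilde\beta_2)_s=K$ yields $\widetilde\beta_2K_s=3K^2$, hence $3KK_{ss}=5K_s^2$, whose solution is exactly the stated power law, and the second equation together with the Codazzi equation for $\widetilde a$ then fixes the remaining functions of $t$ so as to give \eqref{LeviCivitaConnectionnonflatInvariants1cFinal} and \eqref{LeviCivitaConnectionnonflatInvariants1fFinal}. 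You invoke the constancy of $C$ only in the converse direction; in the forward direction your plan attributes the power law to ``Codazzi and coordinate-compatibility constraints,'' which is not where it comes from, so as written the argument does not reach \eqref{LeviCivitaConnectionnonflatInvariants1ALL}.
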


\begin{proof}

Assume that the normal curvature $\varkappa$ is non-vanishing and $M^2_1$ has pointwise 1-type Gauss map. Then, \eqref{PW1typeEq1} is satisfied for a constant vector $C$ and a smooth non-vanishing function $\phi$ such that formulas  \eqref{PW1TypeEq1ALL} hold true.
Applying $x$ to \eqref{PW1TypeEq1c} and $y$ to \eqref{PW1TypeEq1d}, we obtain
\begin{subequations}\label{PW1TypeEq2ALL}
\begin{eqnarray}
\label{PW1TypeEq2a}b(\langle C, x\wedge y\rangle+\langle C,  n_1\wedge n_2\rangle)&=&0,\\
\label{PW1TypeEq2b}d(\langle C, x\wedge y\rangle-\langle C,  n_1\wedge n_2\rangle)&=&0.
\end{eqnarray}
\end{subequations}
Note that if $b^2+d^2=0$ at a point $p$, then  \eqref{kappaeq-a} implies $\varkappa(p)=0$ which contradicts the assumption that $\varkappa$ is non-vanishing.
If both $b$ and $d$ are non-zero, then $\langle C, x\wedge y\rangle = \langle C,  n_1\wedge n_2\rangle = 0$ and from \eqref{PW1TypeEq1b} we get $\varkappa = 0$, a contradiction. Hence, $b = 0, d \neq 0$ or $b \neq 0, d = 0$.
Therefore, replacing $x$ and $y$ if necessary, we may assume that $d\neq 0$ and $b=0$. Thus, \eqref{PW1TypeEq2b} gives $\langle C, x\wedge y\rangle=\langle C,  n_1\wedge n_2\rangle$ and equations  \eqref{GaussianEQ-a}, \eqref{kappaeq-a} imply $K=\varkappa = ad$.  Therefore, \eqref{PW1TypeEq1a} and \eqref{PW1TypeEq1b} imply
 \begin{equation}\label{TheoremnonflatEq1}
\langle C, x\wedge y\rangle=\langle C,  n_1\wedge n_2\rangle=\frac 12\quad\mbox{ and }\quad \phi=-4K.
\end{equation}
On the other hand, from $y(\langle C, x\wedge y\rangle)=0$ we obtain $d \langle C, x\wedge n_2\rangle = 0$ which gives $\beta_1=0$ because of \eqref{PW1TypeEq1e}. Therefore, combining \eqref{PW1TypeEq1ALL} and \eqref{TheoremnonflatEq1}, we obtain
\begin{equation}\label{TheoremnonflatEQUfCPre}
\quad C=-\frac12\left( x \wedge y +n_1\wedge n_2+\frac {\beta_2}{K} x\wedge n_1\right).
\end{equation}

Next, we define a local coordinate system $(s,t)$ in the following way:
$$s=s(u,v)=\int_{u_0}^uf^2(\tau,v)d\tau,\quad t=v.$$
Note that we have
\begin{equation*}\label{MinimalcaseII8}
\partial_u=f^2\partial_s\quad\mbox{and}\quad \partial_v=\widetilde f\partial_s+\partial_t,
\end{equation*}
where $\widetilde f=\frac{\partial}{\partial v}\left(\int_{u_0}^u f^2(\tau,v)d\tau\right)$,
which give
$$\langle\partial_s,\partial_s\rangle=0,\quad \langle\partial_s,\partial_t\rangle=-1,\quad \langle\partial_t,\partial_t\rangle=2\widetilde f.$$
 Thus, the metric tensor of $M^2_1$ with respect to the new coordinate system $(s,t)$  takes the form
\begin{equation*}\label{SurfaceQMinimalInducedM}
g=-(ds \otimes dt+dt \otimes ds) + 2 \widetilde{f} dt \otimes dt.
\end{equation*}
It is easy to calculate that
\begin{subequations}\label{QMinimalS421LeviCivita1ALL}
\begin{eqnarray}
\label{QMinimalS421LeviCivita1a} \nabla_{\partial_s}\partial_s&=&0,\\
\label{QMinimalS421LeviCivita1b} \nabla_{\partial_s}\partial_t= \nabla_{\partial_t}\partial_s&=& -\widetilde f_s\partial_s,\\
\label{QMinimalS421LeviCivita1c} \nabla_{\partial_t}\partial_t&=& \widetilde f_s\partial_t+(2\widetilde f\widetilde f_s-\widetilde f_t)\partial_s.
\end{eqnarray}
\end{subequations}
Moreover, $\bar x = \partial_s =\frac 1fx$ and  $\bar y = \widetilde f\partial_s+\partial_t = fy$. Hence, $\{\bar x, \bar y \}$ form a pseudo-orthonormal frame field of the tangent bundle of $M^2_1$. Using \eqref{QMinimalS421LeviCivita1ALL} and taking into account that $b=\beta_1=0$, we get that the Levi-Civita connection of $M^2_1$ satisfies \eqref{LeviCivitaConnectionnonflat1ALL}
for the functions $\widetilde a=a/f^2$, $\widetilde c=f^2c$, $\widetilde d=f^2d$ and $\widetilde \beta_2=f\beta_2$. Now, equality \eqref{TheoremnonflatEQUfCPre} gives the second equality in \eqref{TheoremnonflatEQUfC}.

With respect to the new coordinate system $(s,t)$ the integrability conditions \eqref{IntEqAll} become
\begin{subequations}\label{NewIntEqAll}
\begin{eqnarray}
\label{NewIntEq4} \widetilde c_s&=&\widetilde \beta_2,\\
\label{NewIntEq5} \widetilde d_s&=&0,\\
\label{NewIntEq2} \widetilde f \widetilde a_s+\widetilde a_t&=&-\widetilde a\widetilde \beta_2-2\widetilde a\widetilde f_s,\\
\label{NewIntEq6} (\widetilde \beta_2)_s&= & \widetilde a\widetilde d =K=\varkappa,\\
\label{NewIntEq1} \widetilde f_{ss} &=& \widetilde a\widetilde d =K=\varkappa.
\end{eqnarray}
\end{subequations}
Since  $d \neq 0$, from \eqref{NewIntEq5}  we get the second
equality in \eqref{LeviCivitaConnectionnonflatInvariants1ad} for a
non-vanishing smooth function $\lambda_3=\lambda_3(t)$.
Furthermore, combining \eqref{NewIntEq4},  \eqref{NewIntEq6}, and \eqref{NewIntEq1}, we get
\begin{equation}
\label{LeviCivitaConnectionnonflatInvariants1c}  \widetilde c= \widetilde f+\lambda_4s+\lambda_5
\end{equation}
 and
\begin{eqnarray}
\label{NewIntEqAraEq1} \widetilde{\beta}_2- \widetilde f_s=\lambda_4
\end{eqnarray}
 for some smooth functions $\lambda_4=\lambda_4(t)$ and $\lambda_5=\lambda_5(t)$ .

 Next, using  \eqref{QMinimalS421LeviCivita1ALL} we obtain
\begin{equation*}
\begin{array}{lll}
\widetilde{\nabla}_{\bar x} C & = & -\frac{1}{2} \left(2+ \bar x\left(\frac{\widetilde\beta_2}K\right) \right) \bar x \wedge n_1,\\
\widetilde{\nabla}_{\bar y} C & = & -\frac{1}{2} \left( 2\widetilde c+\bar y\left(\frac{\widetilde\beta_2}K\right)+\frac{\widetilde\beta_2}K\left(\widetilde\beta_2- \widetilde f_s\right)  \right) \bar x \wedge n_1.
\end{array}
\end{equation*}
Since $C$ is a constant vector, we have $\widetilde{\nabla}_{\bar x} C =0$ and $\widetilde{\nabla}_{\bar y} C=0$.
So, we get the following two equations:
\begin{subequations}\label{NewCoordPW1Cond1ALL}
\begin{eqnarray}
\label{NewCoordPW1Cond1a} \bar x\left(\frac{\widetilde\beta_2}K\right)&=&-2,\\
\label{NewCoordPW1Cond1b} \bar y\left(\frac{\widetilde\beta_2}K\right)+\frac{\widetilde\beta_2}K\left(\widetilde\beta_2- \widetilde f_s\right) + 2\widetilde c &=&0.
\end{eqnarray}
\end{subequations}
From \eqref{NewCoordPW1Cond1a} using  \eqref{NewIntEq6} we get $\widetilde\beta_2 K_s = 3 K^2$. Differentiating the last equality with respect to $s$, we obtain the equation
$$3 K K_{ss} = 5 K_s^2,$$
whose solution is given by
$$K = \left(-\frac 23 \lambda_1(t) s-\frac 23 \lambda_2(t) \right)^{-3/2}$$
for some smooth functions $\lambda_1 = \lambda_1(t) \neq 0$ and $\lambda_2 = \lambda_2(t)$.
The function $\widetilde\beta_2$ is expressed as $\widetilde\beta_2 = \frac{3}{\lambda_1(t)} K^{1/3}$, which implies
\eqref{LeviCivitaConnectionnonflatInvariants1beta}.
Combining \eqref{NewCoordPW1Cond1Gaussian} and \eqref{NewIntEq1} with the second equality in \eqref{LeviCivitaConnectionnonflatInvariants1ad},
we obtain that $\widetilde a$ is expressed as given by the first equality in \eqref{LeviCivitaConnectionnonflatInvariants1ad}.

Furthermore, from \eqref{LeviCivitaConnectionnonflatInvariants1beta} and \eqref{NewIntEqAraEq1} we get
\begin{equation}
\label{LeviCivitaConnectionnonflatInvariants1f} \widetilde f=-\frac{9}{\lambda_1^2}\left(-\frac 23 \lambda_1(t) s-\frac 23 \lambda_2(t) \right)^{1/2}-\lambda_4(t) s+\lambda_6(t)
\end{equation}
for a smooth function $\lambda_6=\lambda_6(t)$.

Next, using  \eqref{NewCoordPW1Cond1b} and taking into consideration  \eqref{LeviCivitaConnectionnonflatInvariants1beta}, \eqref{LeviCivitaConnectionnonflatInvariants1c}, \eqref{NewIntEqAraEq1} and \eqref{NewCoordPW1Cond1Gaussian},
we get that the function $\lambda_5(t)$ is expressed as follows:
\begin{equation}
\label{LeviCivitaConnectionnonflatInvariantsCond1c} \lambda_5=\left(\frac{\lambda_2}{\lambda_1}\right)'+\frac{\lambda_2\lambda_4}{\lambda_1}.
\end{equation}
Further, using \eqref{LeviCivitaConnectionnonflatInvariants1ad}, \eqref{LeviCivitaConnectionnonflatInvariants1beta},
 \eqref{LeviCivitaConnectionnonflatInvariants1f}, and   \eqref{NewIntEq2}, we obtain
$$s\left(\frac 13 \lambda_1\lambda_3\lambda_4+\lambda_1'\lambda_3-\frac 23 \lambda_1\lambda_3'\right)+\left(\lambda_1\lambda_3\lambda_6+\lambda_2'\lambda_3-\frac 23 \lambda_2\lambda_3'+\frac 43\lambda_2\lambda_3\lambda_4\right) = 0,$$
which implies the following expressions for the functions $\lambda_4(t)$ and $\lambda_6(t)$:
\begin{eqnarray}
\label{LeviCivitaConnectionnonflatInvariantsCond1b}  \lambda_4&=&2\frac{\lambda_3'}{\lambda_3}-3\frac{\lambda_1'}{\lambda_1},\\
\label{LeviCivitaConnectionnonflatInvariantsCond1d} \lambda_6&=&-\frac{\lambda_2'}{\lambda_1} - \frac {2\lambda_2\lambda_3'}{\lambda_1\lambda_3}+ \frac {4\lambda_1'\lambda_2}{\lambda_1^2}.
\end{eqnarray}
Now, from \eqref{LeviCivitaConnectionnonflatInvariants1c}, \eqref{LeviCivitaConnectionnonflatInvariants1f}, \eqref{LeviCivitaConnectionnonflatInvariantsCond1c}, and \eqref{LeviCivitaConnectionnonflatInvariantsCond1b} we obtain \eqref{LeviCivitaConnectionnonflatInvariants1cFinal}. 
Finally, taking into consideration \eqref{LeviCivitaConnectionnonflatInvariants1f}, \eqref{LeviCivitaConnectionnonflatInvariantsCond1b}, and \eqref{LeviCivitaConnectionnonflatInvariantsCond1d} we obtain that the function $\widetilde f$ is expressed as given in \eqref{LeviCivitaConnectionnonflatInvariants1fFinal}.   
Hence, the proof of the necessary condition is completed.

In order to prove the sufficient condition we assume that there exists a coordinate system $(s,t)$ such that \eqref{LeviCivitaConnectionnonflat1ALL} and \eqref{LeviCivitaConnectionnonflatInvariants1ALL} are satisfied. By a straightforward computation using \eqref{E42MargTrapGMLap} one can check that $G$ satisfies \eqref{PW1typeEq1} for the non-constant function
$\phi = -4 \left(-\frac 23 \lambda_1s-\frac 23 \lambda_2\right)^{-3/2}$ and vector $C = -\frac12\left(\bar x \wedge \bar y +n_1\wedge n_2+\frac {\widetilde \beta_2}{K}\bar x\wedge n_1\right)$. A further calculation yields that $C$ is a non-zero constant vector. Hence, $M^2_1$ has proper pointwise 1-type Gauss map of second kind.
\end{proof}

\begin{cor}\label{ACorollaryKDneq0}
Let $M^2_1$ be a quasi-minimal surface with non-flat normal connection in the pseudo-Euclidean space $\mathbb E^4_2$. If the Gauss map of $M^2_1$ satisfies \eqref{PW1typeEq1}, then  $M^2_1$ has proper pointwise 1-type Gauss map of second kind.
\end{cor}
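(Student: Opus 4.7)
The plan is to deduce this corollary as an immediate consequence of Proposition \ref{THMNONFLATNORMAL}. That proposition, applied under the hypothesis that $M^2_1$ has non-vanishing normal curvature and pointwise 1-type Gauss map, already produces explicit formulas for the function $\phi$ and the constant vector $C$ appearing in \eqref{PW1typeEq1}, together with the form of the geometric invariants $\widetilde a,\widetilde c,\widetilde d,\widetilde \beta_2,\widetilde f$ and the Gauss curvature $K$ in terms of the functions $\lambda_1,\lambda_2,\lambda_3$. So the whole task reduces to reading off from \eqref{TheoremnonflatEQUfC} and \eqref{NewCoordPW1Cond1Gaussian} that (i) $\phi$ is non-constant, and (ii) $C$ is non-zero.

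For (i), I would use that $\phi=-4K$ with
\[
K=\left(-\tfrac 23 \lambda_1(t) s-\tfrac 23 \lambda_2(t)\right)^{-3/2}.
\]
Since $\lambda_1(t)$ is, by Proposition \ref{THMNONFLATNORMAL}, a non-vanishing smooth function, one checks directly that $\partial_s K\neq 0$, so $K$ (and hence $\phi$) cannot be constant. This verifies that the pointwise 1-type Gauss map is proper.

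For (ii), I would invoke the expression
\[
C=-\tfrac12\!\left(\bar x\wedge\bar y+n_1\wedge n_2+\tfrac{\widetilde\beta_2}{K}\,\bar x\wedge n_1\right)
\]
obtained in \eqref{TheoremnonflatEQUfC}. From the inner product table for the frame $\{x\wedge y,x\wedge n_1,x\wedge n_2,y\wedge n_1,y\wedge n_2,n_1\wedge n_2\}$ of the Grassmannian displayed in Subsection 3.2, the three vectors $\bar x\wedge\bar y$, $n_1\wedge n_2$, $\bar x\wedge n_1$ are linearly independent (the first two have non-zero self-inner-product while the third is null and orthogonal to them). Hence $C$ has a non-vanishing $\bar x\wedge\bar y$-component equal to $-\tfrac12$, and therefore $C\neq 0$. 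This shows that the Gauss map is of second kind, completing the corollary.

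There is essentially no obstacle here beyond citing Proposition \ref{THMNONFLATNORMAL}; the only point requiring a moment of care is to observe that the coefficient functions $\lambda_i$ produced by that proposition cannot degenerate in a way that would make $\phi$ constant or $C$ vanish, and both facts are built into the statement (non-vanishing of $\lambda_1$) or into the structure of the Grassmannian frame (linear independence).
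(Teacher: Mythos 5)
Your proposal is correct and matches the paper's intended argument: the corollary is stated without a separate proof precisely because the final sentence of the proof of Proposition \ref{THMNONFLATNORMAL} already records that $\phi=-4K$ is non-constant and $C$ is a non-zero constant vector. Your verification of these two facts (non-constancy of $K$ from $\lambda_1\neq 0$, and $C\neq 0$ from its non-zero $\bar x\wedge\bar y$-component in the Grassmannian basis) is exactly the reading-off the authors have in mind.
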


In the following theorem, we obtain a parametrization for the surface described in Proposition \ref{THMNONFLATNORMAL}. This completes  the classification of  quasi-minimal surfaces in $\mathbb E^4_2$ with non-flat normal connection and pointwise 1-type Gauss map.

\begin{thm}\label{THMNONFLATNORMALMainTheo}
Let $M^2_1$ be a quasi-minimal surface in $\mathbb E^4_2$ with non-vanishing normal curvature. Then, $M^2_1$ has pointwise 1-type Gauss map if and only if it is congruent to the surface given by
\begin{align}\label{PW1TYPEPAra}
z(s,t)=- s\lambda_3(t) n_1'(t)-\frac{3 \sqrt{6}  \lambda_3(t)\sqrt{-s \lambda_1(t)}}{\lambda_1^2(t)}n_1(t)+\xi(t).
\end{align}
for some smooth functions $\lambda_1 =\lambda_1(t),\ \lambda_3=\lambda_3(t)$  and some $\mathbb E^4_2$-valued smooth functions $n_1(t)$, $\xi(t)$ satisfying the equations
\begin{eqnarray}\label{MainTheLastEqs}
\langle n_1,n_1\rangle=\langle n_1',n_1'\rangle=\langle n_1,\xi'\rangle=\langle \xi',\xi'\rangle&=0,&\quad
\langle n_1',\xi'\rangle =\frac{1}{\lambda_3},\\
\label{PW1TYPEEqu1} n_1''-\left(\frac{\lambda _3'}{\lambda _3 }-\frac{3 \lambda _1'}{\lambda _1}\right)n_1'+\frac{1}{\lambda _3 }n_1&=0,&
\end{eqnarray}
and
\begin{align}\label{PW1TYPEEqu2}
\begin{split}
 \xi'''+\left(\frac{3 \lambda _3'}{\lambda _3} -\frac{3\lambda _1' }{\lambda _1 }\right)\xi''+ \frac{-3 \lambda _1  \left(\lambda _1'  \lambda _3' +\lambda _3  \lambda _1'' \right)+3 \lambda _3  {\lambda _1'}   ^2+\lambda _1   ^2 \left(2 \lambda _3'' +1\right)}{\lambda _1   ^2\lambda _3} \, \xi'=\zeta,
\end{split}
\end{align}
where $\zeta=\zeta(t)$ is the $\mathbb E^4_2$-valued function given by
\begin{align}\label{PW1TYPEEqu2b}
\begin{split}
\zeta=& 81\frac{8{\lambda _1'}^2 \lambda_3^2 - 2 \lambda _1  \lambda _1'' \lambda_3^2 + \lambda _1^2 \lambda_3 \lambda _3'' - 7\lambda_1 \lambda _1' \lambda_3 \lambda _3' +\lambda _1^2 {\lambda _3'}^2}{\lambda _1^5 \lambda_3} \,n_1+ 162 \frac{\lambda _1  \lambda _3' -2 \lambda _3  \lambda _1'}{\lambda _1^4} \,n_1'.
\end{split}
\end{align}
\end{thm}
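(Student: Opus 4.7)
The plan is to integrate the structure equations of Proposition~\ref{THMNONFLATNORMAL} twice in the $s$-direction to produce the explicit form~\eqref{PW1TYPEPAra}, and to translate the remaining structure equations and pseudo-orthonormality constraints into the differential relations~\eqref{MainTheLastEqs}--\eqref{PW1TYPEEqu2} on the $\mathbb E^4_2$-valued functions of $t$ that appear as the ``constants'' of integration.

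As a preliminary reduction, I would observe that the coordinate shift $s\mapsto s+\lambda_2(t)/\lambda_1(t)$ preserves the normal form $\bar x=\partial_s$, $\bar y=\widetilde f\partial_s+\partial_t$ and normalizes $\lambda_2\equiv 0$; the corresponding modification of $\widetilde f$ is consistent with \eqref{LeviCivitaConnectionnonflatInvariants1fFinal}, which explains why $\lambda_2$ does not appear in the final parametrization. Under this normalization, \eqref{LeviCivitaConnectionnonflat1c} gives $\partial_s n_1=0$, so $n_1=n_1(t)$. Integrating $\partial_s\bar x=\widetilde a\,n_1$ from \eqref{LeviCivitaConnectionnonflat1a} and using the easily checked antiderivative $\int\widetilde a\,ds=\lambda_3\widetilde\beta_2$ gives $\bar x=\lambda_3\widetilde\beta_2 n_1+A(t)$. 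Substituting this into the second identity of \eqref{LeviCivitaConnectionnonflat1c} (with $\widetilde d=1/\lambda_3$) forces $A(t)=-\lambda_3(t)\,n_1'(t)$, and a further integration of $\bar x=\partial_s z$ with respect to $s$ delivers~\eqref{PW1TYPEPAra}, with $\xi(t)$ as the second integration constant.

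The pseudo-orthonormality of $\{\bar x,\bar y,n_1,n_2\}$ must hold identically in $s$; since the inner products $\langle\bar x,\bar x\rangle$, $\langle\bar x,\bar y\rangle+1$, $\langle\bar y,\bar y\rangle$ and $\langle\bar x,n_1\rangle$ are polynomials in the linearly independent functions $1,(-s\lambda_1)^{\pm 1/2},(-s\lambda_1)^{-1}$, matching coefficients immediately produces the five relations~\eqref{MainTheLastEqs}. The second-order ODE~\eqref{PW1TYPEEqu1} for $n_1$ then arises from the compatibility condition $\partial_t\bar x=\partial_s(\bar y-\widetilde f\bar x)=n_1-\widetilde f_s\bar x-\widetilde f\widetilde a\,n_1$, which combines \eqref{LeviCivitaConnectionnonflat1a} and \eqref{LeviCivitaConnectionnonflat1b}: after substituting $\bar x=\lambda_3(\widetilde\beta_2 n_1-n_1')$ and invoking $\widetilde\beta_2-\widetilde f_s=2\lambda_3'/\lambda_3-3\lambda_1'/\lambda_1$ (from \eqref{NewIntEqAraEq1} and~\eqref{LeviCivitaConnectionnonflatInvariantsCond1b}), the coefficients of $n_1$ and $n_1'$ reproduce~\eqref{PW1TYPEEqu1} exactly.

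The main obstacle will be the derivation of the third-order inhomogeneous ODE \eqref{PW1TYPEEqu2}--\eqref{PW1TYPEEqu2b} for $\xi$. I would extract it from the remaining equation $\widetilde\nabla_{\bar y}\bar y=\widetilde f_s\bar y+\widetilde c\,n_1+\widetilde d\,n_2$ in~\eqref{LeviCivitaConnectionnonflat1b}, which implicitly determines $n_2$ as a linear combination of $z_{tt}$, $\bar x$, $\bar y$ and $n_1$. Substituting the parametrization~\eqref{PW1TYPEPAra} expands this vector identity in $\mathbb E^4_2$ as a polynomial in $(-s\lambda_1)^{\pm 1/2}$ whose coefficients involve $\xi',\xi'',\xi''',n_1,n_1'$ and $n_1''$; matching the $s$-independent component and eliminating $n_1''$ via~\eqref{PW1TYPEEqu1} yields~\eqref{PW1TYPEEqu2} with source~\eqref{PW1TYPEEqu2b}, while the $s$-dependent coefficients vanish automatically as a consequence of the relations already derived. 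Conversely, starting from any data $\lambda_1,\lambda_3,n_1(t),\xi(t)$ satisfying \eqref{MainTheLastEqs}--\eqref{PW1TYPEEqu2}, a direct calculation verifies every identity in \eqref{LeviCivitaConnectionnonflat1ALL}--\eqref{LeviCivitaConnectionnonflatInvariants1ALL}, and Proposition~\ref{THMNONFLATNORMAL} then delivers the pointwise 1-type Gauss map.
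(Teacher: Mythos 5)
Your overall strategy---integrate the structure equations of Proposition \ref{THMNONFLATNORMAL} twice in the $s$-direction, identify the integration ``constants'' $-\lambda_3 n_1'$ and $\xi(t)$, and convert the remaining frame and structure relations into ODEs in $t$---is exactly the paper's, and your derivations of \eqref{PW1TYPEPAra}, \eqref{MainTheLastEqs} and \eqref{PW1TYPEEqu1} are sound. (One small ordering point: the paper establishes \eqref{PW1TYPEEqu1} \emph{before} \eqref{MainTheLastEqs}, because the inner products involving $\bar y$ contain $n_1''$ and need \eqref{PW1TYPEEqu1} to be reduced; your claim that coefficient-matching ``immediately'' yields all five relations glosses over this, but it is repairable by reordering.)

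The genuine gap is in your derivation of the third-order equation \eqref{PW1TYPEEqu2}. The identity $\widetilde\nabla_{\bar y}\bar y=\widetilde f_s\bar y+\widetilde c\, n_1+\widetilde d\, n_2$ from \eqref{LeviCivitaConnectionnonflat1b} cannot by itself produce a constraint containing $\xi'''$: its left-hand side involves at most $z_{tt}$, hence at most $\xi''$, and $n_2$ on the right-hand side is precisely the unknown that absorbs the normal component, so this equation only \emph{defines} $n_2$ in terms of $\xi''$, $\xi'$, $n_1$, $n_1'$ and $s$ (any further algebraic constraints coming from $\langle n_2,n_2\rangle=0$, $\langle n_1,n_2\rangle=-1$ are still of order at most two in $\xi$). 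To obtain \eqref{PW1TYPEEqu2} one must differentiate this expression for $n_2$ once more in the $t$-direction, i.e.\ impose the Weingarten equation $\widetilde\nabla_{\bar y}n_2=-\widetilde c\,\bar x-\bar y-\widetilde\beta_2 n_2$ from \eqref{LeviCivitaConnectionnonflat1d}; it is $\partial_t$ applied to the $\xi''$-term of $n_2$ that produces $\xi'''$, and elimination of $n_1''$ via \eqref{PW1TYPEEqu1} then produces the source term \eqref{PW1TYPEEqu2b}. This is exactly how the paper proceeds: it first solves the second equation of \eqref{LeviCivitaConnectionnonflat1b} for $n_2$ and then substitutes into the second equation of \eqref{LeviCivitaConnectionnonflat1d}. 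The equation you cite is a necessary ingredient (it supplies the formula for $n_2$), but the step ``matching the $s$-independent component of this identity yields \eqref{PW1TYPEEqu2}'' would fail as written.
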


\begin{proof}
Let $M^2_1$ be the surface described in Proposition \ref{THMNONFLATNORMAL} for some smooth functions $\lambda_1(t)$ and $\lambda_3(t)$.  Then, the first equation in \eqref{LeviCivitaConnectionnonflat1c} implies $n_1=n_1(t)$. Thus, the first equation in \eqref{LeviCivitaConnectionnonflat1a} and \eqref{LeviCivitaConnectionnonflatInvariants1ad} give the differential equation
$$z_{ss}=\lambda_3\left(-\frac 23 \lambda_1s-\frac 23 \lambda_2\right)^{-3/2}n_1(t)$$
which implies
$$z_s= \frac{3\lambda_3}{\lambda_1} \left(-\frac 23 \lambda_1s-\frac 23 \lambda_2\right)^{-1/2}n_1(t) + \xi _1(t)$$
for some $\mathbb E^4_2$-valued smooth function $\xi_1(t)$. Hence, the position vector $z(s,t)$ is given by
\begin{equation}\label{MainTheoEqu1}
z(s,t)=-\frac{3 \sqrt{6} \lambda _3\sqrt{-s \lambda _1-\lambda _2}}{\lambda_1^2} \,n_1(t) +s \,\xi_1(t)+\xi(t)
\end{equation}
where $\xi(t)$ is a $\mathbb E^4_2$-valued smooth function. Without loss of generality, we may assume that $\lambda _2=0$ by re-defining $s$.

Now, using \eqref{LeviCivitaConnectionnonflat1c}, \eqref{LeviCivitaConnectionnonflatInvariants1ad} and \eqref{LeviCivitaConnectionnonflatInvariants1beta} we obtain
$\xi_1(t) = -\lambda_3 n_1'(t)$. Combining the last equality with \eqref{MainTheoEqu1} we get that $M^2_1$ is congruent to the surface parametrized by  \eqref{PW1TYPEPAra}.

Now, from the first equation in \eqref{LeviCivitaConnectionnonflat1b} we have
$$\frac{\partial}{\partial s}\left(\widetilde f z_s+z_t\right)=n_1.$$
The last equation together with \eqref{LeviCivitaConnectionnonflatInvariants1f} and \eqref{PW1TYPEPAra} imply \eqref{PW1TYPEEqu1}. Note that, since $\bar x=z_s$ and $\bar y=\widetilde f z_s+z_t$ form a pseudo-orthonormal frame field of the tangent bundle of $M^2_1$, \eqref{PW1TYPEPAra} and \eqref{PW1TYPEEqu1} imply \eqref{MainTheLastEqs}.

On the other hand, from the second equality in \eqref{LeviCivitaConnectionnonflat1b} we have
$$n_2=\frac1{\widetilde d}\left(\widetilde\nabla_{\bar y}\bar y-\widetilde f_s{\bar y}-\widetilde c n_1\right).$$
Using \eqref{LeviCivitaConnectionnonflatInvariants1ALL}, \eqref{PW1TYPEEqu1} and $\lambda_2=0$, we compute the right-hand side of this equality and obtain
\begin{align*}\label{LABELNEEDED1}
\begin{split}
n_2=& \frac{1}{2 s \lambda_1^2}\left(2s \lambda_1 (2 \lambda_3' \lambda_1 - 3 \lambda_1' \lambda_3+  3 \sqrt{6} \sqrt{-s \lambda _1} \lambda_3 \right)\xi'+\lambda _3 \xi''+ \frac{\lambda _3}{\lambda_1^3} \left(s \lambda_1^3 - 27 \lambda_3\right)n_1'
\\&
+ \frac{3\lambda _3}{2 s \lambda _1 {}^5}\left( 54 s \lambda_1\left(2 \lambda _3  \lambda _1' - \lambda _1\lambda _3' \right)+ \sqrt{6} \sqrt{-s \lambda _1} \left(s \lambda _1^3-27 \lambda _3 \right)\right)n_1.
\end{split}
\end{align*}
The last equality together with  the second equality in \eqref{LeviCivitaConnectionnonflat1d} and \eqref{PW1TYPEEqu1} imply
\begin{align}\notag
\begin{split}
 \xi'''+ 3\left(\frac{\lambda_3'}{\lambda _3} -\frac{\lambda _1' }{\lambda _1 }\right)\xi''+ \left(\frac{2\lambda_3''}{\lambda_3} - \frac{3\lambda_3'\lambda_1'}{\lambda_1\lambda_3} - \frac{3 \lambda_1''}{\lambda_1} + \frac{3\lambda_1'^2}{\lambda_1^2}+\frac{1}{\lambda_3}  \right)\, \xi' =\\
= 81 \left(\frac{8 \lambda_1'^2 \lambda_3}{\lambda_1^5} - \frac{2\lambda_3  \lambda _1''} {\lambda_1^4} + \frac{\lambda _3''}{\lambda_1^3} - \frac{7 \lambda _1' \lambda _3'}{\lambda_1^4} + \frac{\lambda_3'^2}{\lambda_1^3 \lambda_3} \right)\,n_1+ 162 \left(\frac{\lambda _3'}{\lambda_1^3} - \frac{2 \lambda _3  \lambda _1'}{\lambda _1^4}\right) \,n_1'.
\end{split}
\end{align}
Denoting by $\zeta$ the vector field  given in \eqref{PW1TYPEEqu2b} we obtain \eqref{PW1TYPEEqu2}.
Hence, the proof of the  necessary condition is completed.

The converse follows by a direct computation.
\end{proof}

Below we present an explicit example of a quasi-minimal surface with non-flat normal connection and pointwise 1-type Gauss map.

\begin{Example}
Let $\mathcal{M}$ be the surface given by
\begin{align*}
\begin{split}
z(s,t)=&\left(-4s^{1/2}\cos t+s\sin t+\frac 12\cos t,-4s^{1/2}\sin t-s\cos t+\frac 12\sin t,\right.\\
&-4s^{1/2}\sin t-s\cos t-\frac 12\sin t,\left.-4s^{1/2}\cos t+s\sin t-\frac 12\cos t\right)
\end{split}
\end{align*}
in the pseudo-Euclidean space $\mathbb E^4_2$. Calculating the tangent vector fields $z_s$ and $z_t$ we get $\langle z_s, z_s \rangle = 0$, $\langle z_s, z_t \rangle = -1$, $\langle z_t, z_t \rangle = -8 s^{1/2}$. Hence, $\mathcal{M}$  is a Lorentz surface in $\mathbb E^4_2$.

We consider the following normal vector field $n_1= (\cos t,\sin t,\sin t,\cos t)$.
Note that  $\langle n_1, n_1 \rangle = 0$. Hence, there exists a normal vector field $n_2$ such that $\langle n_2, n_2 \rangle = 0$ and $\langle n_1, n_2 \rangle = -1$.
Now, we consider the following pseudo-orthonormal tangent frame field
$$\bar x =  z_s,\quad \bar y = -4 s^{1/2} z_s + z_t.$$

 By a straightforward  computation
one can see that the mean curvature vector field  is
$H=-n_1$. Hence, $\mathcal{M}$ is a quasi-minimal surface.
Direct computations show that
$$ \widetilde\nabla_{\bar x}n_1=0, \qquad \widetilde\nabla_{\bar y}n_1=-\bar x - 2s^{-1/2} n_1,$$
which imply  $\widetilde \beta_1 = 0$, $\widetilde \beta_2=-2s^{-1/2}$. So, $\mathcal{M}$ is a surface with non-parallel mean curvature vector field. The Gauss curvature $K$ and the normal curvature $\varkappa$ are given by the expressions:
$$K = s^{-3/2}, \quad \varkappa = s^{-3/2}.$$
Hence, $\mathcal{M}$  is a quasi-minimal surface with non-flat normal connection.
 By a straightforward  computation we obtain that
\eqref{PW1typeEq1} is satisfied for the function $\phi = -4 s^{-3/2}$ and the constant vector
$C = -\frac12\left(\bar x \wedge \bar y - 2 s \,\bar x\wedge n_1 + n_1\wedge n_2 \right)$. So, $\mathcal{M}$  is of proper pointwise 1-type Gauss map of second type.

Note that the Levi-Civita connection of $\mathcal{M}$ satisfies \eqref{LeviCivitaConnectionnonflat1ALL}
for the functions $\widetilde a=s^{-3/2}$,  $\widetilde d=1$, $\widetilde \beta_2=-2s^{-1/2}$, $\widetilde c=\widetilde f=-4s^{1/2}$,
which can be obtained by putting $\lambda_1=-\frac 32$, $\lambda_2=0$, $\lambda_3=1$ in  \eqref{LeviCivitaConnectionnonflatInvariants1ALL}.
\end{Example}

\begin{rem}
 We would like to note that in the Minkowski space $\mathbb E^4_1$ all
marginally trapped surfaces with pointwise 1-type Gauss map have flat normal connection, while in the pseudo-Euclidean space $\mathbb E^4_2$ there exist quasi-minimal surfaces with non-flat normal connection and pointwise 1-type Gauss map.
\end{rem}

\begin{rem}
As far as we know, all examples of quasi-minimal surfaces in $\mathbb E^4_2$ known till
now in the literature are surfaces with parallel mean curvature vector field. Here we give an explicit example of a quasi-minimal surface with non-parallel mean curvature
vector field.
\end{rem}

\textbf{Acknowledgments:}
The first author is  partially supported by the National Science Fund,
Ministry of Education and Science of Bulgaria under contract
DFNI-I 02/14. The second  author  is supported by T\"UB\.ITAK  (Project Name: Y\_EUCL2TIP, Project Number: 114F199).

This work was done during the second author's  visit at the Bulgarian Academy of Sciences in June 2015. He would like to express his deepest gratitude to Professors G. Ganchev and V. Milousheva for their warm hospitality during his stay.

\vskip 5mm

\end{document}